\newcommand{\CC}{\mathbb{C}}
\newcommand{\ZZ}{\mathbb{Z}}
\newcommand{\sS}{\mathbb{S}}
\newcommand{\NN}{\mathbb{N}}
\newcommand{\RR}{\mathbb{R}}
\newcommand{\calC}{\mathcal{C}}
\newcommand{\calA}{\mathcal{A}}
\newcommand{\calX}{\mathcal{X}}
\newcommand{\calY}{\mathcal{Y}}
\newcommand{\calQ}{\mathcal{Q}}
\newcommand{\frakB}{\mathfrak{B}}
\newcommand{\frakX}{\mathfrak{X}}
\newcommand{\twisD}{\widetilde{\mathcal{D}}_1}
\newcommand{\twisV}{\widetilde{\mathcal{V}}_1}
\newcommand{\Id}{\operatorname{Id}}
\DeclareMathOperator{\Dom}{Dom}
\DeclareMathOperator{\sym}{sym}
\DeclareMathOperator{\tr}{tr}
\newcommand{\per}{\mathrm{per}}
\newcommand{\pl}[1]{\foreignlanguage{polish}{#1}}
\newcommand{\GL}{\operatorname{GL}}
\newcommand{\discr}{\operatorname{discr}}
\newcommand{\Amax}{A_\mathrm{max}}
\newcommand{\sigmaP}{\sigma_\mathrm{p}}
\newcommand{\sigmaEss}{\sigma_\mathrm{ess}}
\renewcommand{\Re}{\operatorname{Re}}
\newcommand{\lrparen}[1]{
  \left(\mkern-3mu #1 \mkern-3mu\right)}
\newtheorem{theorem}{Theorem}[section]
\newtheorem{proposition}[theorem]{Proposition}
\newtheorem{lemma}[theorem]{Lemma}
\newtheorem{corollary}[theorem]{Corollary}
\newtheorem*{theorem*}{Theorem}
\theoremstyle{plain}
\newcounter{thm}
\newtheorem{main_theorem}[thm]{Theorem}
\theoremstyle{definition}
\newtheorem{example}{Example}
\newtheorem{remark}[theorem]{Remark}
\numberwithin{equation}{section}
\title{Spectral properties of some complex Jacobi matrices}
\author{Grzegorz Świderski}
\address{
	\pl{
	Grzegorz \'Swiderski \\
	Department of Mathematics \\
	KU Leuven \\
	Celestijnenlaan 200B box 2400 \\
	BE-3001 Leuven \\
	Belgium \&
	Mathematical Institute \\
	University of Wrocław \\
	pl. Grunwaldzki 2/4 \\
	50-384 Wrocław \\
	Poland
	}
}
\email{grzegorz.swiderski@kuleuven.be}
\keywords{Complex Jacobi matrix, asymptotics, generalised eigenvectors, continuous spectrum}
\subjclass[2010]{Primary: 47B36.}
\begin{document}
\selectlanguage{english}

\begin{abstract}
We study spectral properties of bounded and unbounded complex Jacobi matrices. In particular,
we formulate conditions assuring that the spectrum of the studied operators is continuous on
some subsets of the complex plane and we provide uniform asymptotics of their generalised eigenvectors. 
We illustrate our results by considering complex perturbations of real Jacobi matrices belonging to 
several classes: asymptotically periodic, periodically modulated and the blend of these two.
Moreover, we provide conditions implying existence of a unique closed extension. The method of
the proof is based on the analysis of a generalisation of shifted Tur\'an determinants
to the complex setting.
\end{abstract}

\maketitle

\section{Introduction}
For given two sequences $(a_n : n \in \NN_0)$ and $(b_n : n \in \NN_0)$ of complex numbers such that
for every $n \geq 0$ one has $a_n \neq 0$, we define the (complex) \emph{Jacobi matrix} by the formula
\begin{equation*}
	\calA =
	\begin{pmatrix}
         b_0 & a_0 & 0   & 0      &\ldots \\
         a_0 & b_1 & a_1 & 0       & \ldots \\
         0   & a_1 & b_2 & a_2     & \ldots \\
         0   & 0   & a_2 & b_3   &  \\
         \vdots & \vdots & \vdots  &  & \ddots
	\end{pmatrix}.
\end{equation*}
The action of $\calA$ on \emph{any} sequence of complex numbers is defined by the formal matrix multiplication. 
Let $A$ be the minimal operator associated with $\calA$. Specifically, by $A$ we mean the closure in 
$\ell^2(\NN_0)$ of the restriction of $\calA$ to the set of the sequences of finite support. Let us 
recall that
\[
	\langle x, y \rangle = \sum_{n=0}^\infty x_n \overline{y_n}, \quad
	\ell^2(\NN_0) = \big\{ x \in \CC^{\NN_0} \colon \langle x, x \rangle < \infty \big\}.
\]
Moreover, we define the maximal operator $\Amax$ by setting $\Amax x = \calA x$ for $x \in \Dom(\Amax)$, where
\[
	\Dom(\Amax) = \big\{ x \in \ell^2(\NN_0) : \calA x \in \ell^2(\NN_0) \big\}.
\]
The matrix $\calA$ is called \emph{proper} if $A = \Amax$ and \emph{improper} otherwise. 
The matrix $\calA$ is proper if the Carleman condition 
\begin{equation} \label{eq:1}
	\sum_{n=0}^\infty \frac{1}{|a_n|} = \infty
\end{equation}
is satisfied (see, \cite[Example 2.7]{Beckermann2001}). In the case when sequences $(a_n : n \in \NN_0)$ and $(b_n : n \in \NN_0)$ are real, the matrix $\calA$ is proper exactly when $A$ is self-adjoint.

The Jacobi matrix $\calA$ is \emph{symmetric} if $a_n \in \RR$ and $b_n \in \RR$ for any $n$. 
Otherwise $\calA$ is \emph{non-symmetric}.
In the symmetric case  we recover classical Jacobi matrices. This case has been studied thoroughly. 
Let us mention that it has applications in such areas as: spectral theory of self-adjoint operators 
(see, e.g. \cite{Dombrowski1985}), orthogonal polynomials,
approximation theory, numerical analysis, stochastic processes (see, e.g. \cite{Schoutens2000}), the moment problem and continued fractions (see, e.g. \cite{Simon1998}). For the applications and basic
properties of the non-symmetric case we refer to \cite{Beckermann2001} and to the references therein.

Spectral analysis of some classes of non-symmetric Jacobi matrices has been studied in several articles. In \cite{Arlinskii2006} there was studied complex one-rank perturbation of symmetric Jacobi matrices. In \cite{Beckermann1997} there was considered in detail the case when sequences defining 
$\calA$ are periodic. In \cite{Christiansen2017, Egorova2005, Golinskii2005, Golinskii2007, Golinskii2005a, Hansmann2011} there was considered the behaviour of the
point spectrum of compact perturbations of the periodic case. In \cite{Bourget2018} Mourre 
commutator method has been applied to the study of continuous spectrum of discrete 
Schr\"odinger operators. In \cite{Siegl2017} has been studied in detail an explicit example of 
unbounded complex Jacobi matrices. In particular, it was observed there a spectral phase transition
phenomenon. In \cite{Stampach2013, Stampach2017} a class of Jacobi matries with discrete spectrum has been studied. Finally, in \cite{Malejki2014} there was considered the point spectrum of some unbounded
complex Jacobi matrices.

The main problem in the study of spectral properties of the non-symmetric case lies in the fact that
in general $A$ is not normal. Hence, most of the tools and intuitions coming from the Spectral Theorem
is lost. It seems that there are few results concerning the continuous spectrum of non-symmetric
Jacobi matrices.  The aim of this article is to fill this gap.

Before we go further let us introduce some terminology. A sequence $(u_n \in \NN_0)$ is \emph{generalised 
eigenvector} associated with $z \in \CC$, if it satisfies the recurrence relation
\begin{equation} \label{eq:11}
	z u_n = a_n u_{n+1} + b_n u_n + a_{n-1} u_{n-1}, \quad (n \geq 1),
\end{equation}
with some non-zero initial condition $(u_0, u_1)$.
For any positive integer $N$ define \emph{$N$-step transfer matrix} by
\begin{equation} \label{eq:12}
	X_n(z) = \prod_{j=n}^{N+n-1} B_j(z) \quad \text{where} \quad
	B_j(z) =
	\begin{pmatrix}
		0 & 1 \\
		-\frac{a_{j-1}}{a_j} & \frac{z - b_j}{a_j}
	\end{pmatrix},
\end{equation}
then
\[
	\begin{pmatrix}
		u_{n+N-1} \\
		u_{n+N}
	\end{pmatrix}
	=
	X_n(z)
	\begin{pmatrix}
		u_{n-1} \\
		u_n
	\end{pmatrix}, \quad (n \geq 1).
\]
Given a compact set $K \subset \CC$, we say that the uniformly bounded
sequence of continuous mappings $Y_n : K \rightarrow \GL(2, \CC)$ belongs to 
$\twisD \big( K, \GL(2, \CC) \big)$ if\footnote{For any matrix $X$ we form the matrix $\overline{X}$ by taking the complex conjugation of every entry of $X$.}
\[
	\sum_{n \geq 1} \sup_{z \in K} \big\| Y_{n+1}(z) - \overline{Y_n(z)} \big\| < \infty,
\]

We are ready to state our main result.
\begin{main_theorem} \label{thm:A}
Let $N$ be a positive integer and $i \in \{0, 1, \ldots, N-1\}$. Suppose that
\begin{equation} \label{eq:57}
	\lim_{n \to \infty} X_{nN+i} = \calX, \qquad 
	\lim_{n \to \infty} \frac{a_{nN+i-1}}{|a_{nN+i-1}|} = \gamma, \qquad
	\lim_{n \to \infty} \frac{a_{nN+i-1}}{a_{(n+1)N+i-1}} = 1
\end{equation}
for some $\gamma$ and $\calX$. Let $K$ be a compact subset of\footnote{A discriminant of a $2 \times 2$ matrix $X$ is $\discr X = (\tr X)^2 - 4 \det X$.}
\begin{equation} \label{eq:59}
	\Lambda = \big\{ z \in \CC : \calX(z) \in \GL(2, \RR) \text{ and } \discr \calX(z) < 0 \big\}.
\end{equation}
If 
\begin{equation} \label{eq:58}
	\big( X_{nN+i} : n \in \NN \big) \in \twisD \big( K, \GL(2, \CC) \big),
\end{equation}
then there is a constant $c>1$ such that for every generalised eigenvector associated with $z \in K$
\[
	c^{-1} \big( |u_0|^2 + |u_1|^2 \big) \leq
	|a_{nN+i-1}| \big( |u_{nN+i-1}|^2 + |u_{nN+i}|^2 \big) \leq
	c \big( |u_0|^2 + |u_1|^2 \big).
\]
\end{main_theorem}

A simple consequence of Theorem~\ref{thm:A} is the following
\begin{main_theorem} \label{thm:B}
Let the hypotheses of Theorem~\ref{thm:A} be satisfied for some $i \in \{0, 1, \ldots, N-1\}$. If
\[
	\sum_{n=1}^\infty \frac{1}{|a_{nN+i-1}|} = \infty,
\]
then $\calA$ is proper\footnote{For any operator $X$ by $\sigma(X)$, $\sigmaEss(X)$ and $\sigmaP(X)$ we denote the spectrum, the essential spectrum and the point spectrum of $X$, respectively.}, $K \cap \sigmaP(A) = \emptyset$ and $K \subset \sigma(A)$. Conversely, 
if 
\[
	\sum_{n=0}^\infty \frac{1}{|a_n|} < \infty,
\]
and the hypotheses are satisfied for \emph{every} $i \in \{ 0, 1, \ldots, N-1 \}$,
then $\calA$ is improper, $\sigmaEss(A) = \emptyset$, $\sigma(A) = \CC$ and $\sigmaP(\Amax) = \CC$.
\end{main_theorem}

Let us present some of the ideas of the proof of Theorem~\ref{thm:A}. For any generalised eigenvector
$u$ we define (generalised) \emph{$N$-shifted Tur\'an determinant} by the formula
\[
	S_n = 
	\Re 
	\Big( 
	\overline{\gamma} a_{n+N-1} 
	\big( \overline{u_n} u_{n+N-1} - \overline{u_{n-1}} u_{n+N}\big) 
	\Big).
\]
Observe that in the symmetric case if $z \in \RR$ and both $u_0, u_1$ are real, then the definition
reduces to the classical $N$-shifted Tur\'an determinant
\[
	a_{n+N-1} 
	\det
	\begin{pmatrix}
		u_{n+N-1} & u_{n-1} \\
		u_{n+N} & u_n
	\end{pmatrix}.
\]
We prove Theorem~\ref{thm:A} by careful analysis of the sequence $(S_{nN+i} : n \in \NN)$. 
In particular, we prove that this sequence is convergent uniformly on $K$ to a function of definite
sign and without zeros. In the setup of symmetric Jacobi matrices this approach turned out to
be fruitful and allowed even to recover the spectral measure of $A$ and derive pointwise asymptotics of
their formal eigenvector, see \cite{SwiderskiTrojan2019} for details. The method used in \cite{SwiderskiTrojan2019} is based on the diagonalisation of transfer matrices. This idea 
seems to be difficult to apply in our setting. Instead we extend some ideas from our recent articles \cite{block2018, periodicI}.

The structure of the article is as follows. In Section~\ref{sec:2} we fix notation and collected some
basic definitions used in the rest of the article. The relations between generalised eigenvectors
and spectral properties of complex Jacobi matrices is studied in Section~\ref{sec:3}. 
In Section~\ref{sec:4} we derive some general properties of quadratic forms on $\CC^2$. 
Section~\ref{sec:5} is devoted to the study of Tur\'an determinants. In the last section, we give
several classes of Jacobi matrices to which our results can be applied. 

\subsection*{Notation}
By $\ZZ$ and $\NN$ we denote the integers and the positive integers, respectively. 
The non-negative integers are denoted by $\NN_0$. The complex numbers are denoted by $\CC$, whereas $\sS^1$ is the set of the complex numbers of modulus $1$. Moreover, by $c$ we denote generic positive constants whose value may change from line to line.

\subsection*{Acknowledgement}
The author would like to thank an anonymous referee for helpful suggestions.
The author was partialy supported by the Foundation for Polish Science (FNP) and 
by long term structural funding: Methusalem grant of the Flemish Government. Part of this work
has been done while the author was employed by the Polish Academy of Sciences.

\section{Preliminaries} \label{sec:2}
In this section we fix notation and we collected some basic definitions used in the rest of the article.

\subsection{Matrices}
By $M_d(\CC)$ we mean the space of $d$ by $d$ matrices with complex entries equipped with the
operator norm. For brevity we sometimes identify complex numbers with $M_1(\CC)$.

For a~sequence of matrices $(X_n : n \in \NN)$ and $n_0, n_1 \in \NN$ we set
\[
      \prod_{k=n_0}^{n_1} X_k = 
       \begin{cases} 
       X_{n_1} X_{n_1 - 1} \cdots X_{n_0} & n_1 \geq n_0, \\
       \Id & \text{otherwise.}
       \end{cases}
\]

For any matrix $X$, we form the matrix $\overline{X}$ by taking complex conjugation of 
every entry of $X$, i.e.
\begin{equation} \label{eq:2}
	\overline{X}_{ij} = \overline{X_{ij}}, \quad (i,j = 1,2, \ldots, d).
\end{equation}
Moreover, we denote by $X^t$ and $X^*$ the transpose and the Hermitian transpose of $X$, respectively.

For every $X \in M_d(\CC)$,
\begin{equation} \label{eq:56}
	\| X \| \leq \| X \|_2 \leq \| X \|_1,
\end{equation}
where $\| X \|_t$ the $t$-norm of the matrix considered as the element of $\CC^{d^2}$.

For any matrix $X$ we define its \emph{Hermitian symmetrisation} by
\[
	\sym [X] = \frac{1}{2} \big( X + X^* \big).
\]
Direct computation shows that for any matrix $Y$ one has
\begin{equation} \label{eq:3}
	Y^* \sym [X] Y = \sym [Y^* X Y]
\end{equation}
and for every $\alpha \in \RR$
\begin{equation} \label{eq:4}
	\sym [\alpha X + Y] = \alpha \sym [X] + \sym [Y].
\end{equation}
Moreover,
\begin{equation} \label{eq:5}
	\big\| \sym [X] \big\| \leq \| X \|.
\end{equation}

\subsection{Twisted Stolz class}
Let $V$ be a normed vector space equipped with an additive mapping preserving the norm, which will be
called \emph{conjugation}. Specifically, we demand that for every $x, y \in V$ one has
\begin{enumerate}[(a)]
\item $\begin{aligned}[b]
	\overline{x + y} = \overline{x} + \overline{y},
\end{aligned}$

\item $\begin{aligned}[b]
	\| \overline{x} \| = \| x \|
\end{aligned}$
\end{enumerate}

We define \emph{twisted total variation} of a sequence of vectors $x = (x_n : n \geq M)$ from $V$ by
\begin{equation} \label{eq:6}
	\twisV(x) = \sum_{n=M}^\infty \big\| x_{n+1} - \overline{x_n} \big\|.
\end{equation}
We say that $x$ belongs to \emph{twisted Stolz class} $\twisD(V)$ if $x$ is bounded and 
$\twisV(x) < \infty$. 

The following proposition collects some properties of the twisted total variation. 
The proof is straightforward.
\begin{proposition} \label{prop:1}
Twisted total variation has the following properties.
\begin{enumerate}[(a)]
\item For every $x, y \in V$ one has 
$\begin{aligned}[b]
	\twisV(x+y) \leq \twisV(x) + \twisV(y)
\end{aligned}$

\item If $x \in \twisD(V)$ and $x_{\infty} := \lim_{n \to \infty} x_{n}$ exists, then 
$x_{\infty} = \overline{x_\infty}$.
\end{enumerate}
\end{proposition}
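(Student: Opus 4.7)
Both parts are direct consequences of the definitions. For part (a), I would use the additivity of the conjugation to expand the $(n+1)$-th term in $\twisV(x+y)$:
\[
(x+y)_{n+1} - \overline{(x+y)_n} = (x_{n+1} - \overline{x_n}) + (y_{n+1} - \overline{y_n}),
\]
apply the triangle inequality for $\|\cdot\|$, and sum over $n \geq M$.

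For part (b), the plan is to first observe that additivity of the conjugation forces $\overline{0}=0$ and $\overline{-v}=-\overline{v}$ for every $v \in V$; combining this with the norm-preserving property yields
\[
\big\| \overline{x_n} - \overline{x_\infty} \big\| = \big\| \overline{x_n - x_\infty} \big\| = \| x_n - x_\infty \|,
\]
so that $\overline{x_n} \to \overline{x_\infty}$. Since $\twisV(x) < \infty$, the general term $\|x_{n+1} - \overline{x_n}\|$ tends to $0$. Passing to the limit in $x_{n+1} - \overline{x_n}$, noting that $x_{n+1} \to x_\infty$ and $\overline{x_n} \to \overline{x_\infty}$, gives $x_\infty = \overline{x_\infty}$.

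There is no genuine obstacle here; the only subtle point is making explicit that additivity of the conjugation (without linearity or continuity being assumed a priori) already implies $\overline{-v} = -\overline{v}$, which together with the isometry property gives continuity of the conjugation and legitimises passing to the limit inside it.
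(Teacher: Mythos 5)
Your argument is correct, and it is exactly the ``straightforward'' proof the paper has in mind: the paper omits it entirely, stating only that the proof is immediate from the definitions. Both steps check out — part (a) is the termwise decomposition $(x+y)_{n+1} - \overline{(x+y)_n} = (x_{n+1}-\overline{x_n}) + (y_{n+1}-\overline{y_n})$ plus the triangle inequality, and in part (b) your observation that additivity alone forces $\overline{0}=0$ and $\overline{-v}=-\overline{v}$, hence $\|\overline{x_n}-\overline{x_\infty}\| = \|x_n - x_\infty\|$, is precisely the small point needed to justify passing to the limit in $x_{n+1}-\overline{x_n} \to 0$ and conclude $x_\infty = \overline{x_\infty}$.
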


\begin{proposition} \label{prop:2}
If $V$ is a normed algebra and the conjugation is a homomorphism with respect to the multiplication, 
then
\begin{equation} \label{eq:7}
	\twisV(x_n y_n : n \geq M) \leq
	\sup_{n \geq M} \| x_n \| \cdot \twisV(y_n : n \geq M) +
	\sup_{n \geq M} \| y_n \| \cdot \twisV(x_n : n \geq M)
\end{equation}
and if for all $n \geq M$ the element $x_n$ is invertible, then
\begin{equation} \label{eq:8}
	\twisV(x_n^{-1} : n \geq M) \leq
	\sup_{n \geq M} \| x_n^{-1} \|^2 \cdot \twisV(x_n : n \geq M).
\end{equation}
\end{proposition}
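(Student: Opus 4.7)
For part (a), the plan is the standard product-rule telescoping. I would write
\[
	x_{n+1} y_{n+1} - \overline{x_n y_n}
	= x_{n+1} y_{n+1} - \overline{x_n}\,\overline{y_n}
	= x_{n+1}\bigl(y_{n+1} - \overline{y_n}\bigr) + \bigl(x_{n+1} - \overline{x_n}\bigr) \overline{y_n},
\]
using that conjugation is a multiplicative homomorphism to split $\overline{x_n y_n}$. Taking norms, applying submultiplicativity in $V$, using property~(b) of the conjugation ($\|\overline{y_n}\| = \|y_n\|$), and then bounding $\|x_{n+1}\|$ and $\|y_n\|$ by their suprema gives a termwise estimate; summing over $n \geq M$ delivers \eqref{eq:7}.

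For part (b), the plan is to exploit the resolvent-type identity
\[
	x_{n+1}^{-1} - \overline{x_n}^{-1}
	= x_{n+1}^{-1}\bigl(\overline{x_n} - x_{n+1}\bigr) \overline{x_n}^{-1}.
\]
The preliminary observation needed is that $\overline{x_n^{-1}} = (\overline{x_n})^{-1}$: since conjugation is a multiplicative homomorphism and norm-preserving (hence nonzero), it must fix the identity, so $\overline{x_n}\cdot\overline{x_n^{-1}} = \overline{x_n x_n^{-1}} = \Id$. Once this is in place, taking norms yields
\[
	\bigl\| x_{n+1}^{-1} - \overline{x_n^{-1}} \bigr\|
	\leq \|x_{n+1}^{-1}\| \cdot \|\overline{x_n}^{-1}\| \cdot \|x_{n+1} - \overline{x_n}\|
	\leq \sup_{n \geq M} \|x_n^{-1}\|^2 \cdot \|x_{n+1} - \overline{x_n}\|,
\]
again using $\|\overline{x_n^{-1}}\| = \|x_n^{-1}\|$. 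Summing over $n$ gives \eqref{eq:8}.

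Neither step has a real obstacle; the only point that requires a moment of thought is the fact $\overline{x_n^{-1}} = (\overline{x_n})^{-1}$, which needs the identity to be fixed by conjugation. Everything else is a one-line calculation plus termwise estimation, so the proof should be short and essentially formal.
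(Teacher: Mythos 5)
Your proposal is correct and follows essentially the same route as the paper: the product-rule decomposition of $x_{n+1}y_{n+1}-\overline{x_ny_n}$ (yours places the conjugate on $\overline{y_n}$ rather than $\overline{x_n}$, which is immaterial) followed by termwise estimation for \eqref{eq:7}, and the same resolvent-type identity $x_{n+1}^{-1}-\overline{x_n^{-1}}=x_{n+1}^{-1}(\overline{x_n}-x_{n+1})\overline{x_n^{-1}}$ for \eqref{eq:8}. Your explicit remark that $\overline{x_n^{-1}}=(\overline{x_n})^{-1}$ (i.e.\ that conjugation fixes $\Id$) is a point the paper uses silently, and it holds in the concrete algebras the paper works with.
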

\begin{proof}
Observe that
\[
	x_{n+1} y_{n+1} - \overline{x_n y_n} = 
	(x_{n+1} - \overline{x_n}) y_{n+1} + 
	\overline{x_n} (y_{n+1} - \overline{y_n}).
\]
Hence,
\[
	\| x_{n+1} y_{n+1} - \overline{x_n y_n} \| \leq 
	\| x_{n+1} - \overline{x_n} \| \cdot \| y_{n+1} \| + 
	\| x_{n} \| \cdot \| y_{n+1} - \overline{y_n} \|.
\]
Consequently,
\[
	\| x_{n+1} y_{n+1} - \overline{x_n y_n} \| \leq 
	\sup_{m \geq M} \| y_{m} \| \cdot \| x_{n+1} - \overline{x_n} \| + 
	\sup_{m \geq M} \| x_{m} \| \cdot \| y_{n+1} - \overline{y_n} \|.
\]
Summing by $n$ the formula \eqref{eq:7} follows.

To prove \eqref{eq:8}, observe
\[
	x_{n+1}^{-1} - \overline{x_n^{-1}} = 
	x_{n+1}^{-1} ( \overline{x_n} - x_{n+1} ) \overline{x_n^{-1}}.
\]
Hence
\[
	\Big\| x_{n+1}^{-1} - \overline{x_n^{-1}} \Big\| \leq
	\Big\| x_{n+1}^{-1} \Big\| \cdot 
	\Big\| \overline{x_n} - x_{n+1} \Big\| \cdot
	\Big\| \overline{x_n^{-1}} \Big\|
\]
and the formula \eqref{eq:8} readily follows.
\end{proof}

Let $K$ be a compact subset of the complex plane. In the sequel we are going to use mostly the
Banach algebra $V = C \big( K, M_d(\CC) \big)$ consisting of continuous mappings on $K$ with
values in $M_d(\CC)$. The associated norm is defined by
\begin{equation} \label{eq:9}
	\| f \|_\infty = \sup_{x \in K} \| f(x) \|,
\end{equation}
where $\| \cdot \|$ is the operator norm. For any $f \in V$ we define $\overline{f}$ by
\begin{equation} \label{eq:10}
	\overline{f}(x) = \overline{f(x)}, \quad (x \in K),
\end{equation}
where the matrix $\overline{f(x)}$ is defined in \eqref{eq:2}.

\section{Generalised eigenvectors} \label{sec:3}
For a~number $z \in \CC$, a~non-zero sequence $u = (u_n \colon n \in \NN_0)$ 
will be called a~\emph{generalised eigenvector} provided that it satisfies
the recurrence relation \eqref{eq:11}.
For each non-zero $\alpha \in \CC^2$ there is a unique generalised
eigenvector $u$ such that $(u_0, u_1)^t = \alpha$.

\begin{proposition} \label{prop:3}
Let $z \in \CC$. The sequence $u$ satisfies $\calA u = z u$ if and only if
\begin{equation} \label{eq:14}
	\begin{gathered}
		u_0 \in \CC, \quad u_1 = \frac{z - b_0}{a_0} u_0, \\
		a_{n-1} u_{n-1} + b_n u_n + a_n u_{n+1} = z u_n, \quad (n \geq 1).
	\end{gathered}
\end{equation}
\end{proposition}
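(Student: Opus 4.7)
The plan is a direct unpacking of the formal matrix product $\calA u$ row by row. Since the action of $\calA$ on an arbitrary sequence is defined entrywise by formal matrix multiplication and $\calA$ is tridiagonal, the $n$-th entry of $\calA u$ depends only on $u_{n-1}, u_n, u_{n+1}$ (with the convention that the $u_{-1}$ term is absent). Hence the vector equation $\calA u = z u$ is equivalent to the infinite collection of scalar equations obtained by equating the $n$-th entries for each $n \in \NN_0$.

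First I would write out the zeroth row. Reading off the top row of $\calA$ gives $b_0 u_0 + a_0 u_1 = z u_0$. Because $a_0 \neq 0$ by assumption, this is equivalent to the boundary condition $u_1 = \tfrac{z-b_0}{a_0} u_0$ appearing in \eqref{eq:14}. Next, for $n \geq 1$, the $n$-th row of $\calA$ has nonzero entries $a_{n-1}$, $b_n$, $a_n$ in columns $n-1$, $n$, $n+1$, respectively, so the $n$-th entry of $\calA u$ is exactly $a_{n-1} u_{n-1} + b_n u_n + a_n u_{n+1}$. Equating this to $z u_n$ gives the recurrence in \eqref{eq:14}.

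Conversely, if the boundary condition and the recurrence \eqref{eq:14} hold, then by the same entrywise reading the $n$-th entry of $\calA u - zu$ vanishes for every $n \in \NN_0$, so $\calA u = z u$. Since both directions of the equivalence are read off the same tridiagonal structure, this completes the argument.

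There is no genuine obstacle here; the proposition is essentially a restatement of how $\calA$ acts on sequences, and the only subtlety is the isolated treatment of the zeroth row, which is handled by dividing by $a_0 \neq 0$. The statement is included so that later sections may freely pass between the operator equation $\calA u = zu$ and the scalar recurrence \eqref{eq:11}, identifying generalised eigenvectors with solutions of the three-term recurrence equipped with the prescribed initial condition.
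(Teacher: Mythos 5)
Your proof is correct and matches the paper, which simply states that the proposition follows from direct computation; your row-by-row unpacking of the tridiagonal product, with the zeroth row handled via $a_0 \neq 0$, is exactly that computation spelled out.
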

\begin{proof}
	It immediately follows from the direct computations.
\end{proof}

A matrix $\calA$ is called \emph{determinate} if for all $z \in \CC$ there exists a generalised eigenvector $u$ associated with $z$ such that $u \notin \ell^2(\NN_0)$. A matrix $\calA$ is called \emph{indeterminate} if it is not determinate.

\begin{remark} \label{rem:1}
If there is $z \in \CC$ and a generalised eigenvector $u$ associated with $z$ such that 
$u \notin \ell^2(\NN_0)$, then $\calA$ is determinate (see the discussion after 
\cite[Definition 2.5]{Beckermann2001}).
\end{remark}

The following Theorem has been proved in \cite[Theorem 2.1]{Beckermann2004}.
\begin{theorem}[Beckermann \& Castro Smirnova] \label{thm:3}
A matrix $\calA$ is determinate if and only if it is proper.
\end{theorem}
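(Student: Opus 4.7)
The plan is to prove both directions using Green's identity (summation by parts for the three-term recurrence) together with the duality $A^* = \overline{A}_{\max}$, where $\overline{\calA}$ is the Jacobi matrix obtained by conjugating every coefficient entrywise, $\overline{A}$ is the closure of its restriction to finitely supported sequences, and $\overline{A}_{\max}$ its maximal operator. This duality follows from a direct pairing computation $\langle\calA x, y\rangle = \langle x, \overline{\calA} y\rangle$ valid for $x$ of finite support (and the fact that $A_0$ with finitely supported domain is dense).

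For the direction $\calA$ proper implies $\calA$ determinate, I would argue contrapositively. Assume $\calA$ is indeterminate; then by Remark~\ref{rem:1}, both linearly independent solutions of $\calA u = 0$ lie in $\ell^2(\NN_0)$. In particular, the first-kind polynomials $p = (p_n(0))$ and second-kind polynomials $q = (q_n(0))$ at $z = 0$ belong to $\ell^2$ and satisfy $\calA p = 0$, $p_0 = 1$, and $\calA q = e_0$, where $e_0$ is the first standard basis vector. If $A = \Amax$, then $p \in \Dom(\Amax) = \Dom(A)$; applying the duality to $\overline{q} \in \Dom(\overline{A}_{\max}) = \Dom(A^*)$ with $A^* \overline{q} = e_0$ yields
\[
   0 \;=\; \langle A p,\, \overline{q}\rangle \;=\; \langle p,\, A^* \overline{q}\rangle \;=\; \langle p,\, e_0\rangle \;=\; p_0 \;=\; 1,
\]
a contradiction.

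For the direction $\calA$ determinate implies $\calA$ proper, the inclusion $A \subset \Amax$ is automatic, so the work is in showing $\Dom(\Amax) \subset \Dom(A)$. By the adjoint characterization, this reduces to verifying the Green identity
\[
  \langle\calA u, w\rangle = \langle u, \overline{\calA} w\rangle, \qquad u \in \Dom(\Amax), \; w \in \Dom(\overline{A}_{\max}).
\]
Telescoping the three-term recurrence shows that the defect is precisely the boundary Wronskian at infinity,
\[
  \langle\calA u, w\rangle - \langle u, \overline{\calA} w\rangle = -\lim_{n\to\infty} a_n \bigl(u_n \overline{w_{n+1}} - u_{n+1} \overline{w_n}\bigr),
\]
with existence of the limit following from the telescoping against $\ell^2$ factors. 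To prove the limit vanishes, I would argue by contradiction: a nonzero boundary limit for some admissible pair $(u, w)$, combined with the observation that $\calA$ is determinate iff $\overline{\calA}$ is determinate (by conjugating the recurrence), would allow one to extract two linearly independent $\ell^2$-solutions of $\calA\xi = 0$ from the asymptotic Wronskian data, contradicting determinateness.

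The main obstacle is this second direction: controlling the Wronskian boundary term at infinity when the coefficients $(a_n)$ may be unbounded, which rules out naive truncation approximations $(u_0,\ldots,u_N,0,\ldots)$ in the graph norm. The technical heart is showing that a nonvanishing boundary limit forces two $\ell^2$-decaying solutions of the homogeneous recurrence; this is the non-symmetric analogue of the classical Weyl alternative that underlies the self-adjoint proof of this equivalence.
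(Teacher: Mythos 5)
You should first note that the paper does not prove this statement at all: Theorem~\ref{thm:3} is imported verbatim from \cite[Theorem 2.1]{Beckermann2004}, so your attempt has to be judged as a self-contained proof rather than against an argument in the text. Your first direction (proper $\Rightarrow$ determinate, argued contrapositively) is essentially correct and complete: the duality $A^*=\overline{A}_{\max}$ follows from the pairing computation you describe, Remark~\ref{rem:1} does give that indeterminacy forces \emph{all} generalised eigenvectors at $z=0$ (in particular $p$ and $q$, with $\calA p=0$, $\calA q=\delta_0$) into $\ell^2(\NN_0)$, and the evaluation $0=\langle Ap,\overline q\rangle=\langle p,A^*\overline q\rangle=p_0=1$ is a legitimate contradiction once $A=\Amax$ puts $p$ into $\Dom(A)$. (Minor slip: $q$ is not a solution of $\calA u=0$ but of the recurrence for $n\ge 1$ only; this does not affect the argument.)

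The second direction, however, contains a genuine gap, and it is exactly the step you yourself flag as ``the technical heart.'' The reduction of $\Dom(\Amax)\subset\Dom(A)$ to the Green identity, and the identification of the defect with $-\lim_{n\to\infty}a_n\bigl(u_n\overline{w_{n+1}}-u_{n+1}\overline{w_n}\bigr)$ (whose existence follows from telescoping), are both fine. But the claim that a nonvanishing boundary limit ``would allow one to extract two linearly independent $\ell^2$-solutions of $\calA\xi=0$'' is only asserted, with no mechanism offered: $u$ and $\overline{w}$ satisfy \emph{inhomogeneous} equations $\calA u=f$, $\calA\overline{w}=\overline{g}$ with $f,g\in\ell^2(\NN_0)$, not the homogeneous recurrence, and in the non-symmetric setting there is no von Neumann extension theory, no limit-point/limit-circle alternative, and no invariance-of-the-limit-circle argument to fall back on, which is what makes the analogous step work for self-adjoint Jacobi matrices. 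Passing from a non-vanishing boundary form on $\Dom(\Amax)\times\Dom(\overline{A}_{\max})$ to square-summable generalised eigenvectors is precisely the content of \cite{Beckermann2004} (the implication ``determinate $\Rightarrow$ proper'' was the open part of the equivalence), so without supplying that argument your proposal proves only one of the two implications.
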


\begin{corollary} \label{cor:1}
Let $z_0 \in \CC$. If every generalised eigenvector associated with $z_0$ belongs to $\ell^2(\NN_0)$,
then the matrix $\calA$ is improper. Moreover, $\sigmaEss(A) = \emptyset$, $\sigma(A) = \CC$ and 
$\sigmaP(\Amax) = \CC$.
\end{corollary}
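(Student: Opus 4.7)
First, the hypothesis directly forbids $\calA$ from being determinate (taking $z = z_0$ in the definition), so $\calA$ is indeterminate, and Theorem~\ref{thm:3} yields that $\calA$ is improper. The contrapositive of Remark~\ref{rem:1}, applied to this indeterminacy, further gives that for \emph{every} $z \in \CC$ every generalised eigenvector associated with $z$ lies in $\ell^2(\NN_0)$. The equality $\sigmaP(\Amax) = \CC$ is then immediate: for each $z \in \CC$, Proposition~\ref{prop:3} supplies a generalised eigenvector $u$ with $u_0 = 1$ satisfying $\calA u = z u$; since $u \in \ell^2(\NN_0)$, it is a non-zero element of $\Dom(\Amax)$ with $\Amax u = z u$.

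For $\sigma(A) = \CC$ I exploit the complex-symmetry $\calA^{t} = \calA$ (evident from the explicit form of $\calA$) via the non-Hermitian bilinear pairing $B(f, g) = \sum_n f_n g_n$, which is continuous on $\ell^2(\NN_0) \times \ell^2(\NN_0)$ by Cauchy--Schwarz. Fix $z \in \CC$ and let $u$ be the eigenvector of $\Amax$ for $z$ constructed above. For any finitely supported sequence $v$, complex-symmetry gives
\[
   B(\calA v, u)
   = \sum_{n,m} \calA_{nm} v_m u_n
   = \sum_m v_m (\calA u)_m
   = z\, B(v, u),
\]
and hence $B((\calA - z) v, u) = 0$. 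Since finitely supported sequences form a core for $A$ and $B$ is continuous, this extends to $B((A - z) v, u) = 0$ for every $v \in \Dom(A)$. Therefore $\mathrm{range}(A - z)$ lies in the proper closed subspace $\{f \in \ell^2(\NN_0) : B(f, u) = 0\}$ (proper because $B(\overline{u}, u) = \|u\|^2 > 0$), so $A - z$ is not surjective and $z \in \sigma(A)$.

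Finally, $\sigmaEss(A) = \emptyset$ is the step I expect to be the main obstacle. The plan is to show $A - z$ is Fredholm for every $z \in \CC$. The kernel is at most one-dimensional since $\ker(A - z) \subseteq \ker(\Amax - z)$, and the adjoint $A^{*} = (\overline{\calA})_{\max}$ (which inherits indeterminacy from $\calA$) has $\ker(A^{*} - \bar z) = \operatorname{span}\{\overline{u}\}$, so the closed-range theorem identifies $\overline{\mathrm{range}(A - z)}$ with the codimension-one subspace $\{f : B(f, u) = 0\}$. Upgrading this closure to a genuinely closed range is the delicate point; I would attempt a Weyl-sequence argument, showing that any normalised $v_n \in \Dom(A)$ with $(A - z) v_n \to 0$ and $v_n \to 0$ weakly leads to a contradiction. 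Concretely, truncating $v_n$ at growing indices should concentrate its mass where $|a_n| \to \infty$ (which is forced by $\sum 1/|a_n| < \infty$ together with improperness), making $\|A v_n\|$ blow up unless $v_n$ nearly aligns with the eigenvector $u$, which is incompatible with weak convergence to zero. Once semi-Fredholmness with finite codimension is established, $A - z$ is Fredholm and $z \notin \sigmaEss(A)$.
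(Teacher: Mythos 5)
Your first paragraph and your treatment of $\sigmaP(\Amax)=\CC$ coincide with the paper's proof: indeterminacy follows from the definition (equivalently the contrapositive of Remark~\ref{rem:1}), improperness from Theorem~\ref{thm:3}, and for each $z$ the solution of \eqref{eq:14} with $u_0=1$ is a square-summable eigenvector of $\Amax$. Your bilinear-pairing argument for $\sigma(A)=\CC$ is correct and is a genuinely different route: the paper obtains both $\sigma(A)=\CC$ and $\sigmaEss(A)=\emptyset$ in one stroke by citing \cite[Theorem 2.11(a)]{Beckermann2001}, whereas your argument is self-contained, using only $\calA^t=\calA$, the continuity of $B$, the fact that finitely supported vectors form a core for $A$, and that $A$ is closed (so non-surjectivity of $A-z\Id$ gives $z\in\sigma(A)$).

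The genuine gap is $\sigmaEss(A)=\emptyset$, which you leave as a plan rather than a proof. The missing step is exactly the hard one: showing that $A-z\Id$ has closed range (equivalently is semi-Fredholm) for every $z$. The closed-range theorem only identifies the \emph{closure} of the range with the codimension-one orthogonal complement of $\ker(A^*-\bar z\Id)$ (and the identification $A^*=\Amax(\overline{\calA})$ is itself asserted without proof, though it is standard). The Weyl-sequence heuristic you propose is not carried out, and its key claim is unsubstantiated: largeness of $|a_n|$ does not force $\|\calA v\|$ to be large for vectors $v\in\Dom(A)$ supported far out, because the three terms of $(\calA v)_n$ can cancel --- this is precisely what happens along the tail of $u$ itself, and $\|\calA u\|=|z|\,\|u\|$ no matter how fast $a_n$ grows; moreover ``aligning with $u$'' inside $\Dom(A)$ is problematic since in the improper case $u\notin\Dom(A)$. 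Closedness of the range in the indeterminate situation requires a quantitative input (e.g.\ locally uniform square-summability of generalised eigenvectors and the construction of a bounded, in fact Hilbert--Schmidt, right inverse of $\Amax-z\Id$), which is exactly the content of \cite[Theorem 2.11(a)]{Beckermann2001} that the paper invokes at this point. As written, the conclusion $\sigmaEss(A)=\emptyset$ is not established.
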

\begin{proof}
Remark \ref{rem:1} implies that $\calA$ is indeterminate. Hence, by Theorem~\ref{thm:3} $\calA$ 
is improper.
By \cite[Theorem 2.11(a)]{Beckermann2001}
we obtain
\[
	\sigmaEss(A) = \emptyset \quad \text{and} \quad
	\sigma(A) = \CC.
\]
Let $z \in \CC$. Let $u=(u_n : n \in \NN_0)$ satisfy \eqref{eq:14} with $u_0=1$. Since 
$u \in \ell^2(\NN_0)$ we obtain $z \in \sigmaP(\Amax)$. Thus, $\sigmaP(\Amax) = \CC$ and
the proof is complete.
\end{proof}
	
\begin{proposition} \label{prop:4}
Let $z_0 \in \CC$. If every generalised eigenvector $u$ associated with $z_0$ does 
not belong to $\ell^2(\NN_0)$ then the matrix $\calA$ is proper, $z_0 \notin \sigmaP(\Amax)$ and 
$z_0 \in \sigma(\Amax)$.
\end{proposition}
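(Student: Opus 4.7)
The plan is to establish the three conclusions in sequence, each by a direct application of the results already available in the section.

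For \emph{properness}, I would invoke Remark~\ref{rem:1} together with Theorem~\ref{thm:3}. Any non-zero pair $(u_0, u_1)$ yields a generalised eigenvector associated with $z_0$, and by hypothesis it lies outside $\ell^2(\NN_0)$; Remark~\ref{rem:1} then declares $\calA$ determinate, and Theorem~\ref{thm:3} upgrades this to $A = \Amax$.

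For the exclusion $z_0 \notin \sigmaP(\Amax)$, the natural route is a direct contradiction. If $z_0$ were a point eigenvalue of $\Amax$, the corresponding non-zero eigenvector $u \in \ell^2(\NN_0)$ would satisfy $\calA u = z_0 u$, and Proposition~\ref{prop:3} would identify it as a generalised eigenvector associated with $z_0$ lying in $\ell^2(\NN_0)$, contradicting the hypothesis. The only point requiring attention is that $(u_0, u_1) \neq (0, 0)$, which is automatic since \eqref{eq:11} propagates vanishing initial data forward.

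For the inclusion $z_0 \in \sigma(\Amax)$, I would argue by contradiction again. If $z_0$ lay in the resolvent set of $\Amax$, the bounded inverse $(\Amax - z_0 \Id)^{-1}$ would let me set $v := (\Amax - z_0 \Id)^{-1} e_0$ with $e_0 = (1, 0, 0, \ldots)$. Then $v \in \Dom(\Amax) \subset \ell^2(\NN_0)$ and $\calA v - z_0 v = e_0$. Reading this row by row, the equations for $n \geq 1$ are exactly \eqref{eq:11}, while row $0$ only adds an inhomogeneity that does not interfere with the recurrence. It remains to check that the seed $(v_0, v_1)$ is non-zero; this follows from $v \neq 0$ combined with the forward propagation of \eqref{eq:11}. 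Thus $v$ is a generalised eigenvector associated with $z_0$ lying in $\ell^2(\NN_0)$, again contradicting the hypothesis.

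The main subtlety, though minor, is the distinction between the two notions at play: a generalised eigenvector in the paper's sense requires only \eqref{eq:11} for $n \geq 1$ with a non-trivial seed, whereas an eigenvector of $\Amax$ additionally requires the row-$0$ identity $a_0 u_1 = (z_0 - b_0) u_0$. The resolvent construction $v$ honours the first but deliberately violates the second, which is exactly why it yields the desired contradiction with the hypothesis rather than failing to be a generalised eigenvector altogether.
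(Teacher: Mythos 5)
Your proposal is correct and follows essentially the same route as the paper: determinacy via Remark~\ref{rem:1} and Theorem~\ref{thm:3}, exclusion from the point spectrum via Proposition~\ref{prop:3}, and membership in $\sigma(\Amax)$ by observing that any solution of $(\calA - z_0 \Id)u = \delta_0$ satisfies \eqref{eq:11} with a non-trivial seed, hence is a generalised eigenvector outside $\ell^2(\NN_0)$, so $\Amax - z_0 \Id$ cannot be surjective. Your phrasing of the last step as a contradiction through the resolvent is only a cosmetic variation of the paper's direct surjectivity argument.
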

\begin{proof}
Remark \ref{rem:1} implies that $\calA$ is determinate. Hence, Theorem \ref{thm:3} implies that $\calA$
is proper.

Let the non-zero sequence $u$ be such that $\calA u = z_0 u$, then by Proposition~\ref{prop:3}, 
the sequence $u$ is a generalised eigenvector associated with $z_0$. By the assumption 
$u \notin \ell^2(\NN_0)$. Therefore, $u \notin \Dom(\Amax)$, and consequently, 
$z_0 \notin \sigmaP(\Amax)$.
	      
Observe that if there is a vector $u$ such that $(\calA - z_0 \Id) u = \delta_0$, then it has to satisfy
the following recurrence relation
\[
	\begin{gathered}
		b_0 u_0 + a_0 u_1 = z_0 u_0 + 1 \\
	    a_{n-1} u_{n-1} + b_n u_n + a_n u_{n+1} = z_0 u_n \quad (n \geq 1)
	\end{gathered}
\]
Hence $u$ is a generalised eigenvector associated with $z_0$, thus $u \notin \ell^2(\NN_0)$. 
Therefore, $u \notin \Dom(\Amax)$, and consequently, the operator $\Amax - z_0 \Id$ is not surjective, 
i.e. $z_0 \in \sigma(\Amax)$.
\end{proof}

\section{Uniform non-degeneracy of quadratic forms} \label{sec:4}
Let $K$ be a compact subset of $\CC$. Suppose that for each $z \in K$ there is a sequence
$Q^z = (Q_n^z : n \in \NN)$ of quadratic forms on $\CC^2$. We say that
$\{ Q^z : z \in K \}$ is \emph{uniformly non-degenerated} if there are 
$c_1> 0, c_2 > 0$ and $M \geq 1$ such that for all $v \in \CC^2$, $z \in K$ and $n \geq M$
\[
	c_1 \| v \|^2 \leq |Q_n^z(v)| \leq c_2 \| v \|^2.
\]

In the rest of this article we will use the following matrix
\begin{equation} \label{eq:49}
	E = 
	\begin{pmatrix}
		0 & -1 \\
		1 & 0	
	\end{pmatrix}.
\end{equation}

\begin{proposition} \label{prop:5}
Let $K$ be a compact subset of $\CC$. Let $\{ Q^z : z \in K \}$ be a family of quadratic forms on 
$\CC^2$ given by
\[
	Q^z_n(v) = \big\langle \sym[E Y_n(z)] v, v \big\rangle,
\]
where each $Y_n$ is continuous. If
\[
	\lim_{n \to \infty} \sup_{z \in K} \| Y_n(z) - \calY(z) \| = 0,
\]
where for any $z \in K$
\begin{equation} \label{eq:50}
	\calY(z) \in \GL( 2, \RR) \quad \text{ and } \quad \discr \calY(z) < 0,
\end{equation}
then $\{ Q^z : z \in K \}$ is uniformly non-degenerated on $K$.
\end{proposition}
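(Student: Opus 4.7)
The plan is to prove the upper and lower bounds separately, reducing the lower bound via a perturbation argument to an analysis of the limit matrix $\calY(z)$.

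The upper bound is essentially free. Because $\calY$ is the uniform limit of continuous maps on the compact $K$ it is continuous and bounded, so by uniform convergence $\sup_{z \in K, n \geq 1}\|Y_n(z)\| < \infty$; combining this with \eqref{eq:5} yields $|Q_n^z(v)| \leq \|\sym[E Y_n(z)]\|\|v\|^2 \leq \|Y_n(z)\|\|v\|^2 \leq c_2 \|v\|^2$ for some uniform $c_2$.

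The key algebraic observation for the lower bound is that, for the real matrix $\calY(z)$, one has the identity
\[
\det \sym[E \calY(z)] = -\tfrac{1}{4} \discr \calY(z),
\]
which I would verify by a short direct computation: writing $\calY(z) = \begin{pmatrix} a & b \\ c & d \end{pmatrix}$ with $a,b,c,d \in \RR$, one finds $\sym[E\calY(z)] = \begin{pmatrix} -c & (a-d)/2 \\ (a-d)/2 & b \end{pmatrix}$ and the identity follows from $(a+d)^2 - 4(ad-bc) = \discr \calY(z)$. Under the hypothesis $\discr \calY(z) < 0$ this determinant is strictly positive, and since $\sym[E \calY(z)]$ is real symmetric (hence Hermitian) its two real eigenvalues must share a sign, so it is definite. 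For a definite Hermitian $M \in M_2(\CC)$ I would use the standard bound
\[
|\langle M v, v\rangle| \geq \min(|\lambda_1|, |\lambda_2|) \|v\|^2 \geq \frac{|\det M|}{\|M\|}\|v\|^2,
\]
which combined with continuity of $\calY$ on the compact $K$ (bounding $-\discr \calY(z)$ below by some $\delta>0$ and $\|\sym[E\calY(z)]\|$ above) yields a uniform $c_0 > 0$ with $|\langle \sym[E \calY(z)] v, v\rangle| \geq c_0 \|v\|^2$ for all $z \in K$ and $v \in \CC^2$.

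To pass from $\calY$ to $Y_n$ I split
\[
Q_n^z(v) = \langle \sym[E \calY(z)] v, v\rangle + \langle \sym[E(Y_n(z) - \calY(z))] v, v\rangle;
\]
both summands are real because their matrices are Hermitian, so by the reverse triangle inequality and \eqref{eq:5},
\[
|Q_n^z(v)| \geq \left( c_0 - \sup_{z \in K} \|Y_n(z) - \calY(z)\| \right) \|v\|^2.
\]
Uniform convergence lets me pick $M$ so large that the bracketed quantity is at least $c_0/2$ for every $n \geq M$, giving $c_1 = c_0/2$. I expect the only nontrivial point to be the identity $\det \sym[E \calY] = -\discr \calY/4$; this is the precise place where the hypotheses $\calY(z) \in \GL(2,\RR)$ and $\discr \calY(z) < 0$ enter, and once it is in hand the remainder reduces to standard linear algebra together with compactness of $K$ and uniform convergence of $Y_n$.
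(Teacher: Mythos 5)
Your proposal is correct and follows essentially the same route as the paper: the same identity $\det\sym[E\,\calY(z)] = -\tfrac14\discr\calY(z)$ for real $\calY(z)$, definiteness of the limit form with uniform constants via continuity and compactness of $K$, and the same perturbation/reverse-triangle step using uniform convergence of $Y_n$ to pass the bounds to $Q_n^z$. Your explicit bound $\min(|\lambda_1|,|\lambda_2|) \geq |\det M|/\|M\|$ just makes the paper's "eigenvalues bounded and of the same sign" step more quantitative; no gap.
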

\begin{proof}
Define
\[
	\calQ^z(v) = 
	\big\langle \sym[E \calY(z)] v, v \big\rangle.
\]
Since the matrix $\sym[E \calY(z)]$ is self-adjoint, it has two real eigenvalues 
$\lambda_1(z) \leq \lambda_2(z)$. Moreover, it has real entries for any $z \in K$. Thus, by direct
computation one can verify that
\[
	\det \Big( \sym \big[ E \calY(z) \big] \Big) = -\frac{1}{4} \discr \big( \calY(z) \big).
\]
Therefore, by \eqref{eq:50} and the continuity of $\calY$ on $K$, we obtain that $\lambda_1(z)$ and 
$\lambda_2(z)$ are bounded and of the same sign for every $z \in K$. Consequently, there are 
constants $c_1>0, c_2>0$
such that for any $v \in \CC^2$
\begin{equation} \label{eq:21a}
	c_1 \|v\|^2 \leq | \calQ^z(v) | \leq c_2 \|v\|^2.
\end{equation}

Fix $\epsilon > 0$. Let $M$ be such that for every $n \geq M$
\[
	\sup_{z \in K} \| Y_n(z) - \calY(z) \| < \epsilon.
\]
Observe
\begin{equation} \label{eq:21}
	|\calQ^z(v)| - |Q_n^z(v) - \calQ^z(v)| 
	\leq 
	|Q^z_n(v)| 
	\leq 
	|\calQ^z(v)| + |Q^z_n(v) - \calQ^z(v)|.
\end{equation}
Thus, by \eqref{eq:21a}
\[
	(c_1 - \epsilon) \| v \|^2 \leq |\calQ^z_n(v)| \leq (c_2 + \epsilon) \| v \|^2.
\]
The conclusion follows by taking $\epsilon$ sufficiently small.
\end{proof}

\section{Shifted Tur\'an determinants} \label{sec:5}
In this section we define and study the convergence of Tur\'an determinants.

\subsection{Definitions and basic properties}
Fix a~positive integer $N$, a~Jacobi matrix $A$ and $\gamma \in \sS^1$. 
Let us define a~sequence of quadratic forms $Q^{z, \gamma}$ on $\CC^2$ by the formula
\begin{equation} \label{eq:15}
	Q_n^{z, \gamma}(v) = 
	\bigg\langle 
	\sym 
	\bigg[ 
		\frac{a_{n+N-1}}{\gamma |a_{n+N-1}|}
		E X_n(z)
	\bigg] v, v 
	\bigg\rangle,
\end{equation}
where $X_n$ and $E$ are defined in \eqref{eq:12} and \eqref{eq:49}, respectively.
Then we define the $N$-shifted Turán determinants by
\begin{equation} \label{eq:17}
	S_n^{\gamma}(\alpha, z) = |a_{n+N-1}| Q_n^{z, \gamma} 
	\lrparen{
	\begin{pmatrix} 
		u_{n-1} \\
		u_n
	\end{pmatrix}},
\end{equation}
where $u$ is the generalised eigenvector corresponding to $z \in \CC$ such that 
$(u_0, u_1)^t = \alpha \in \CC^2$. 

The study of the sequence $(S_n^\gamma : n \in \NN)$ is motivated by the following theorem, whose proof is analogous to the proof
of \cite[Theorem 7]{block2018}. We include it for the sake of completeness.
\begin{theorem}
\label{thm:1}
Let $N$ be a positive integer and $i \in \{0, 1, \ldots, N-1 \}$. Let $K$ be a compact set. 
Assume that the family $\big\{ \big(Q^{z, \gamma}_{nN+i} : n \in \NN \big) : z \in K \big\}$ defined in
\eqref{eq:15} is uniformly non-degenerated. Suppose that there are $c'_1>0, c'_2>0$ and $M' \geq 1$ such that 
for all $\alpha \in \CC^2$ such that $\| \alpha \| = 1$, $z \in K$ and $n \geq M'$ 
\begin{equation}
	\label{eq:24}
	c'_1 \leq | S_n^\gamma(\alpha, z) | \leq c'_2.
\end{equation}
Then there is $c > 1$ such that for all $z \in K$, $n \geq 1$ and for every generalised
eigenvector $u$ corresponding to $z$
\[ 
	c^{-1} \big( |u_0|^2 + |u_1|^2 \big) 
	\leq |a_{(n+1)N+i-1} | \big( |u_{nN+i-1}|^2 + | u_{nN+i}|^2 \big) 
	\leq c \big( |u_0|^2 + |u_1|^2 \big).
\]
\end{theorem}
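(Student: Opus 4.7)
The plan is to combine the two quantitative hypotheses pointwise in $n$ via the defining identity for $S^\gamma$, and to dispose of the initial segment by a compactness argument. First, since the generalised eigenvector $u$ depends linearly on the initial datum $\alpha = (u_0,u_1)^t$, both the desired bounding quantity $|a_{(n+1)N+i-1}|(|u_{nN+i-1}|^2 + |u_{nN+i}|^2)$ and $|u_0|^2 + |u_1|^2$ are homogeneous of degree two in $\alpha$. Hence it suffices to prove the inequality for $\|\alpha\| = 1$. Substituting $n \mapsto nN + i$ in the definition \eqref{eq:17} yields the key identity
\[
S^{\gamma}_{nN+i}(\alpha,z) = |a_{(n+1)N+i-1}|\, Q^{z,\gamma}_{nN+i}\!\left(\begin{pmatrix} u_{nN+i-1} \\ u_{nN+i}\end{pmatrix}\right),
\]
which is what links the two assumed bounds to the quantity of interest.

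Setting $M_0 := \max(M, M')$, where $M$ comes from the uniform non-degeneracy of $\{Q^{z,\gamma}\}$ and $M'$ from \eqref{eq:24}, for every $n \geq M_0$, every $z \in K$ and every $\alpha$ with $\|\alpha\|=1$ the identity above combined with $c_1 \|v\|^2 \leq |Q^{z,\gamma}_{nN+i}(v)| \leq c_2 \|v\|^2$ and $c'_1 \leq |S^\gamma_{nN+i}(\alpha,z)| \leq c'_2$ immediately gives
\[
\frac{c'_1}{c_2} \leq |a_{(n+1)N+i-1}|\bigl(|u_{nN+i-1}|^2 + |u_{nN+i}|^2\bigr) \leq \frac{c'_2}{c_1},
\]
which is precisely the desired inequality on the tail.

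It remains to control the finitely many initial indices $1 \leq n < M_0$. Each transfer matrix $B_j(z)$ satisfies $\det B_j(z) = a_{j-1}/a_j \neq 0$, so the propagation map
\[
\Phi_n(z) : (u_0, u_1)^t \longmapsto (u_{nN+i-1}, u_{nN+i})^t
\]
is a continuous $\GL(2,\CC)$-valued function of $z \in K$. Both $\|\Phi_n(z)\|$ and $\|\Phi_n(z)^{-1}\|$ are therefore strictly positive continuous functions on the compact set $K$, hence bounded above and bounded away from zero. Combined with the non-vanishing of $|a_{(n+1)N+i-1}|$ and the fact that only finitely many $n$ remain, this gives two-sided bounds on the initial segment as well. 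Taking $c$ larger than all constants from both regimes finishes the argument. The proof is essentially direct once the index shift is set up correctly; the only point requiring a moment of care is verifying that the shift $n \mapsto nN+i$ inside $S^\gamma$ produces exactly the weight $|a_{(n+1)N+i-1}|$ that appears in the conclusion, so no hidden boundary terms have to be absorbed.
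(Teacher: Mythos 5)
Your proof is correct and follows essentially the same route as the paper: reduce to $\|\alpha\|=1$ by homogeneity, combine the uniform non-degeneracy of $Q^{z,\gamma}_{nN+i}$ with the two-sided bound \eqref{eq:24} on the tail $n \geq \max(M,M')$, and absorb the finitely many initial indices by a compactness argument. Your handling of the initial segment via the invertible transfer maps $\Phi_n(z)$ is in fact slightly more explicit than the paper's appeal to continuity of $u_n$ in $z$, but it is the same idea.
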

\begin{proof}
Let $z \in K$ and let $u$ be a generalised eigenvector corresponding to $z$ such that $\alpha = (u_0, u_1)^t$. Observe that
\begin{equation} \label{eq:51}
	S^\gamma_n ( \alpha, z ) = \| \alpha \|^2 S^\gamma_n \bigg( \frac{\alpha}{\|\alpha\|}, z \bigg)
\end{equation}
Hence, it is enough to prove the conclusion for $\|\alpha\| = 1$. Since the family 
$\big\{ \big(Q^{z, \gamma}_{nN+i} : n \in \NN \big) : z \in K \big\}$ is uniformly non-degenerated, 
there are $c_1 > 0, c_2 > 0$ and $M \geq M'$ such that for all $n \geq M$
\[
	c_1 | a_{(n+1)N+i-1} |
	\big( |u_{nN+i-1}|^2 + |u_{nN+i}|^2 \big) \leq
	\big| S^{\gamma}_{nN+i}(\alpha, z) \big| \leq
	c_2 | a_{(n+1)N+i-1} |
	\big( |u_{nN+i-1}|^2 + |u_{nN+i}|^2 \big).
\]
Hence, by \eqref{eq:24}
\[
	c_1' c_2^{-1} \leq
	| a_{(n+1)N+i-1} |
	\big( |u_{nN+i-1}|^2 + |u_{nN+i}|^2 \big) \leq
	c_2' c_1^{-1}
\]
for any $n \geq M$. Since each $u_n$ is a continuous function of $z$ we can find another constant $c>1$ such that
\[
	c^{-1} \leq
	| a_{(n+1)N+i-1} |
	\big( |u_{nN+i-1}|^2 + |u_{nN+i}|^2 \big) \leq
	c.
\]
for any $n \geq 1$. In view of \eqref{eq:51} the conclusion follows.
\end{proof}

\subsection{The proof of the convergence}
In this section we are going to show that $(S_{nN+i} : n \in \NN)$ is uniformly convergent to
some function.

\begin{proposition} \label{prop:6}
An alternative formula for $S_n^\gamma$ is
\begin{equation} \label{eq:19}
	S_n^\gamma(\alpha, z) = |a_{n+N-1}| \widetilde{Q}_n^{z, \gamma}
	\lrparen{
		\begin{pmatrix}
			u_{n+N-1} \\
			u_{n+N}
		\end{pmatrix}
	},
\end{equation}
where
\begin{equation} \label{eq:20}
	\widetilde{Q}_n^{z, \gamma}(v) = 
	\bigg\langle
		\sym 
		\bigg[
			\frac{a_{n+N-1}}{\gamma |a_{n+N-1}|} 
			\bigg( \frac{a_{n+N-1}}{a_{n-1}} \bigg)^* E \overline{X_n(z)}
		\bigg]
		v, v
	\bigg\rangle.
\end{equation}
\end{proposition}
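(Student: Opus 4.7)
The plan is to realise $\widetilde{Q}_n^{z,\gamma}$ as the pull-back of $Q_n^{z,\gamma}$ under the change of variables $\alpha = (u_{n-1}, u_n)^t \mapsto \beta = (u_{n+N-1}, u_{n+N})^t = X_n(z)\alpha$, the last equality being the defining property of the transfer matrix in \eqref{eq:12}. Formula \eqref{eq:20} should then fall out after explicitly computing the matrix of the transformed quadratic form, relying only on \eqref{eq:3} and the symplectic structure of $E$.

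Concretely, abbreviating $M_n = \frac{a_{n+N-1}}{\gamma |a_{n+N-1}|} E X_n(z)$, I start from $Q_n^{z,\gamma}(\alpha) = \sprod{\sym[M_n]\alpha}{\alpha}$ and substitute $\alpha = X_n(z)^{-1}\beta$. Moving $X_n(z)^{-1}$ across the inner product yields $Q_n^{z,\gamma}(\alpha) = \sprod{(X_n^{-1})^* \sym[M_n] X_n^{-1} \beta}{\beta}$, and \eqref{eq:3} with $Y = X_n^{-1}$ places the symmetrisation outside, so the expression equals $\sprod{\sym[(X_n^{-1})^* M_n X_n^{-1}]\beta}{\beta}$. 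The $E X_n$ inside $M_n$ then cancels one copy of $X_n^{-1}$, leaving
\[
	(X_n^{-1})^* M_n X_n^{-1} = \frac{a_{n+N-1}}{\gamma |a_{n+N-1}|} (X_n^{-1})^* E.
\]

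The heart of the proof is the $2 \times 2$ symplectic identity
\[
	(X_n(z)^{-1})^* E = \frac{1}{\overline{\det X_n(z)}}\, E\, \overline{X_n(z)},
\]
which is derived from the elementary relation $E A = (\det A)(A^t)^{-1} E$ for invertible $A \in M_2(\CC)$ (a routine check that $E A E^{-1}$ equals the transposed adjugate of $A$). Applying this with $A = \overline{X_n(z)}$ and observing $(A^t)^{-1} = (X_n(z)^*)^{-1} = (X_n(z)^{-1})^*$ gives the desired form. I expect this symplectic computation to be the only step requiring real care, since the conjugates and signs must line up precisely; the remainder is bookkeeping. Telescoping with $\det B_j(z) = a_{j-1}/a_j$ from \eqref{eq:12} gives $\det X_n(z) = a_{n-1}/a_{n+N-1}$, hence $\overline{\det X_n(z)}^{-1} = (a_{n+N-1}/a_{n-1})^*$. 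Substituting reproduces exactly the matrix inside $\sym[\cdot]$ in \eqref{eq:20}, and multiplying by $|a_{n+N-1}|$ gives \eqref{eq:19}.
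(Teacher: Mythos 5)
Your proposal is correct and follows essentially the same route as the paper: substitute $(u_{n-1},u_n)^t = X_n^{-1}(z)(u_{n+N-1},u_{n+N})^t$, use \eqref{eq:3} to pull the symmetrisation outside, and reduce to the $2\times 2$ identity $(X_n^{-1})^*E = \overline{\det X_n}^{\,-1} E\,\overline{X_n}$ together with $\det X_n = a_{n-1}/a_{n+N-1}$. The only cosmetic difference is that you derive this identity from the adjugate relation $EA = (\det A)(A^t)^{-1}E$, whereas the paper verifies it by direct computation; the substance is identical.
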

\begin{proof}
By \eqref{eq:17} and $\gamma^{-1} = \overline{\gamma}$ one has
\[
	S_n^\gamma(\alpha, z) = 
	\bigg\langle
		\sym 
		\big[
			\overline{\gamma} a_{n+N-1} E X_n(z)
		\big]
		\begin{pmatrix}
			u_{n-1} \\
			u_n
		\end{pmatrix},
		\begin{pmatrix}
			u_{n-1} \\
			u_n
		\end{pmatrix}
	\bigg\rangle.
\]
Thus,
\begin{align*}
	S_n^\gamma(\alpha, z) 
	&=
	\bigg\langle
		\sym 
		\big[
			\overline{\gamma} a_{n+N-1}E X_n(z)
		\big]
		X_n^{-1}(z)
		\begin{pmatrix}
			u_{n+N-1} \\
			u_{n+N}
		\end{pmatrix},
		X_{n}^{-1}(z)
		\begin{pmatrix}
			u_{n+N-1} \\
			u_{n+N}
		\end{pmatrix}
	\bigg\rangle \\
	&=
	\bigg\langle
		\big[ X_{n}^{-1}(z) \big]^*
		\sym 
		\big[
			\overline{\gamma} a_{n+N-1} E X_n(z)
		\big]
		X_n^{-1}(z)
		\begin{pmatrix}
			u_{n+N-1} \\
			u_{n+N}
		\end{pmatrix},
		\begin{pmatrix}
			u_{n+N-1} \\
			u_{n+N}
		\end{pmatrix}
	\bigg\rangle.
\end{align*}
Hence, by \eqref{eq:3}
\[
	S_n^\gamma(\alpha, z) = 
	\bigg\langle
		\sym 
		\Big[
			\overline{\gamma} a_{n+N-1} 
			\big[ X_{n}^{-1}(z) \big]^* E
		\Big]
		\begin{pmatrix}
			u_{n+N-1} \\
			u_{n+N}
		\end{pmatrix},
		\begin{pmatrix}
			u_{n+N-1} \\
			u_{n+N}
		\end{pmatrix}
	\bigg\rangle.
\]
By direct computations one can verify that for any $X \in M_2(\CC)$
\[
	\det(\overline{X}) \big( X^{-1} \big)^* E = E \overline{X} = 
	\begin{pmatrix}
		-\overline{X_{21}} & -\overline{X_{22}} \\
		\overline{X_{12}} & \overline{X_{11}}
	\end{pmatrix}.
\]
Thus
\[
	\overline{\gamma} a_{n+N-1} \big[ X_{n}^{-1}(z) \big]^* E = 
	\overline{\gamma} a_{n+N-1} \bigg( \frac{a_{n+N-1}}{a_{n-1}} \bigg)^* E \overline{X_n(z)}
\]
and the formula \eqref{eq:19} follows.
\end{proof}

The next lemma provides the main algebraic part of our main result.
\begin{lemma} \label{lem:1}
Let $u$ be a~generalised eigenvector associated with $z \in \CC$ and $\alpha \in \CC^2$. Then
\begin{equation} \label{eq:25}
	|S_{n+N}^{\gamma}(\alpha, z) - S_{n}^{\gamma}(\alpha, z)| \leq
	\bigg\| a_{n+2N-1} X_{n+N}(z) -
	a_{n+N-1} \bigg( \frac{a_{n+N-1}}{a_{n-1}} \bigg)^* \overline{X_{n}(z)} \bigg\|
	\Big( |u_{n+N-1}|^2 + |u_{n+N}|^2 \Big).
\end{equation}
\end{lemma}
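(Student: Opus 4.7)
The strategy is to write both $S_{n+N}^\gamma$ and $S_n^\gamma$ as quadratic forms evaluated on the same vector $v = (u_{n+N-1}, u_{n+N})^t$, subtract them, and then apply the submultiplicativity bound \eqref{eq:5} together with the fact that the unitary matrix $E$ and the unit-modulus scalar $\gamma$ contribute a factor of $1$ to the operator norm.

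First, I would apply the definition \eqref{eq:17} directly to index $n+N$. Using the linearity \eqref{eq:4} of $\sym$ to pull the positive real factor $|a_{n+2N-1}|$ inside, and using $\gamma^{-1} = \overline{\gamma}$, this expresses $S_{n+N}^\gamma(\alpha, z)$ as
\[
\bigl\langle \sym\bigl[\overline{\gamma}\, a_{n+2N-1}\, E\, X_{n+N}(z)\bigr]\, v, v \bigr\rangle.
\]
Next, for $S_n^\gamma$ I would invoke Proposition~\ref{prop:6}, whose formula \eqref{eq:19}--\eqref{eq:20} evaluates $S_n^\gamma$ on precisely the same vector $v$, yielding
\[
S_n^\gamma(\alpha, z) = \Bigl\langle \sym\Bigl[\overline{\gamma}\, a_{n+N-1}\, \Bigl( \tfrac{a_{n+N-1}}{a_{n-1}} \Bigr)^{\!*}\, E\, \overline{X_n(z)}\Bigr]\, v, v \Bigr\rangle.
\]

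Subtracting these two expressions and applying the additivity from \eqref{eq:4} (with $\alpha = \pm 1$), the difference $S_{n+N}^\gamma(\alpha, z) - S_n^\gamma(\alpha, z)$ becomes $\langle \sym[\overline{\gamma}\, E\, M]\, v, v\rangle$, where
\[
M = a_{n+2N-1} X_{n+N}(z) - a_{n+N-1} \Bigl( \tfrac{a_{n+N-1}}{a_{n-1}} \Bigr)^{\!*} \overline{X_n(z)},
\]
after factoring $E$ on the left out of both summands. Cauchy--Schwarz then gives $|S_{n+N}^\gamma - S_n^\gamma| \leq \|\sym[\overline{\gamma} E M]\| \cdot \|v\|^2$. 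The bound \eqref{eq:5} dominates this by $\|\overline{\gamma} E M\|$, and since $|\overline{\gamma}| = 1$ and $E$ is orthogonal with $\|E\| = 1$, we conclude $\|\overline{\gamma} E M\| = \|M\|$. Finally, $\|v\|^2 = |u_{n+N-1}|^2 + |u_{n+N}|^2$, yielding exactly \eqref{eq:25}.

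I do not anticipate a serious obstacle: the work is essentially algebraic bookkeeping, combining definitions with the elementary properties \eqref{eq:3}--\eqref{eq:5}. The only minor subtlety is to notice that the two different formulas for $S_n^\gamma$ (the one from \eqref{eq:17} and the one from Proposition~\ref{prop:6}) must be used for $S_{n+N}^\gamma$ and $S_n^\gamma$ respectively, so that both end up as quadratic forms on the common vector $(u_{n+N-1}, u_{n+N})^t$; without this choice the norms of the entries of $v$ would not match the right-hand side of \eqref{eq:25}.
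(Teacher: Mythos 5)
Your proof is correct and is essentially the paper's own argument: the paper likewise expresses $S_{n+N}^\gamma$ via the definition \eqref{eq:17} and $S_n^\gamma$ via Proposition~\ref{prop:6} so that both are quadratic forms on $(u_{n+N-1},u_{n+N})^t$, subtracts to obtain $\langle \sym[\overline{\gamma}C_n(z)]v,v\rangle$ with $C_n(z)=E\,M$, and concludes by Cauchy--Schwarz, \eqref{eq:5}, and the fact that $\overline{\gamma}$ and $E$ do not change the operator norm. No gaps to report.
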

\begin{proof}
By Proposition~\ref{prop:6}
\begin{equation} \label{eq:26}
	S_{n+N}^\gamma(\alpha, z) - S_n^\gamma(\alpha, z) =
	\bigg\langle
		\sym \big[ \overline{\gamma} C_n(z) \big]
		\begin{pmatrix}
			u_{n+N-1} \\
			u_{n+N}
		\end{pmatrix},
		\begin{pmatrix}
			u_{n+N-1} \\
			u_{n+N}
		\end{pmatrix}
	\bigg\rangle,
\end{equation}
where
\begin{equation} \label{eq:27}
	C_n(z) = a_{n+2N-1} E X_{n+N}(z) - a_{n+N-1} \bigg( \frac{a_{n+N-1}}{a_{n-1}} \bigg)^* E \overline{X_n(z)}.
\end{equation}
Therefore, by the Schwarz inequality and \eqref{eq:5}
\[
	|S_{n+N}^\gamma(\alpha, z) - S_n^\gamma(\alpha, z)| \leq
	\| C_n(z) \| \big( |u_{n+N-1}|^2 + |u_{n+N}|^2 \big)
\]
and the formula \eqref{eq:25} follows. The proof is complete.
\end{proof}

We are ready to prove the main result of this article.
\begin{theorem} \label{thm:2}
Let $N$ be a positive integer, $i \in \{0, 1, \ldots, N-1 \}$ and $\gamma \in \sS^1$.
Suppose that $K \subset \CC$ and $\Omega \subset \CC^2 \setminus \{ (0,0) \}$ are compact connected sets. 
Assume that
\begin{enumerate}[(a)]
\item
$\begin{aligned}[b]
	\bigg( \frac{a_{(n+1)N+i-1}}{a_{nN+i-1}} X_{nN+i} : n \in \NN \bigg) \in \twisD \big( K, \GL(2, \CC) \big) 
\end{aligned}$ \label{thm:2:eq:1} 

\item the family $\big\{ \big( \widetilde{Q}_{nN+i}^{z, \gamma} : n \in \NN \big) : z \in K \big\}$
defined in \eqref{eq:20} is uniformly non-degenerated. \label{thm:2:eq:2} 
\end{enumerate}
Then the limit
\begin{equation} \label{eq:28}
	g(\alpha, z) = \lim_{n \to \infty} S_{nN+i}^\gamma(\alpha, z) \qquad (\alpha \in \Omega,\ z \in K)
\end{equation}
exists, where the sequence $(S_n^\gamma : n \geq 1)$ is defined in \eqref{eq:17}. Moreover, $|g|$ is a strictly positive continuous 
function and the convergence in \eqref{eq:28} is uniform on $\Omega \times K$.
\end{theorem}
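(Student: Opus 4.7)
The plan is to combine Lemma~\ref{lem:1} with the alternative formula from Proposition~\ref{prop:6} and the non-degeneracy hypothesis (b) to produce a Gronwall-type recursive estimate for $|S_{nN+i}^\gamma(\alpha,z)|$. Then uniform boundedness, uniform Cauchy convergence, and a positive lower bound will all follow from the summability built into hypothesis (a).

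First I would rewrite the bound of Lemma~\ref{lem:1} in a form that matches hypothesis (a). Setting $Y_n(z) = \tfrac{a_{(n+1)N+i-1}}{a_{nN+i-1}} X_{nN+i}(z)$, one has by a direct computation (using that for a scalar $\lambda$ viewed as a $1\times 1$ matrix, $\lambda^*=\overline\lambda$)
\[
    a_{(n+2)N+i-1} X_{(n+1)N+i} - a_{(n+1)N+i-1}\Big(\tfrac{a_{(n+1)N+i-1}}{a_{nN+i-1}}\Big)^{\!*} \overline{X_{nN+i}}
    = a_{(n+1)N+i-1}\big( Y_{n+1} - \overline{Y_n}\big).
\]
Plugging this into Lemma~\ref{lem:1} with $n\to nN+i$ gives
\[
    \big| S_{(n+1)N+i}^\gamma(\alpha,z) - S_{nN+i}^\gamma(\alpha,z) \big|
    \leq |a_{(n+1)N+i-1}| \cdot \big\| Y_{n+1}(z)-\overline{Y_n(z)}\big\|
       \cdot \big( |u_{(n+1)N+i-1}|^2 + |u_{(n+1)N+i}|^2 \big).
\]

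Next I would apply Proposition~\ref{prop:6} to reexpress $S_{nN+i}^\gamma$ through $\widetilde{Q}_{nN+i}^{z,\gamma}$ evaluated at $(u_{(n+1)N+i-1},u_{(n+1)N+i})^t$. Hypothesis (b) then yields a constant $c_1>0$ and $M\geq 1$ such that
\[
    |a_{(n+1)N+i-1}|\big( |u_{(n+1)N+i-1}|^2 + |u_{(n+1)N+i}|^2 \big)
    \leq c_1^{-1} \big|S_{nN+i}^\gamma(\alpha,z)\big|
\]
for every $n\geq M$, $z\in K$. Combined with the previous display, this produces the key inequality
\[
    \big| S_{(n+1)N+i}^\gamma - S_{nN+i}^\gamma \big|
    \leq c_1^{-1} \eta_n(z)\, \big|S_{nN+i}^\gamma\big|,
    \qquad \eta_n(z):=\big\| Y_{n+1}(z)-\overline{Y_n(z)}\big\|.
\]
By hypothesis (a), $\sum_n \sup_{z\in K}\eta_n(z)<\infty$, so iterating the resulting two-sided bound $(1-c_1^{-1}\eta_n(z))|S_{nN+i}^\gamma|\leq |S_{(n+1)N+i}^\gamma|\leq (1+c_1^{-1}\eta_n(z))|S_{nN+i}^\gamma|$ gives both a uniform upper bound $\sup_{(\alpha,z)\in\Omega\times K}|S_{nN+i}^\gamma(\alpha,z)|\leq C$ (starting from the continuous, hence bounded, function $S_{MN+i}^\gamma$ on $\Omega\times K$) and, for $n$ large enough that $c_1^{-1}\eta_n^*<1/2$, a uniform lower bound through the convergent infinite product $\prod_{k\geq M}(1-c_1^{-1}\eta_k^*)>0$.

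With the uniform upper bound in hand, the telescoping estimate
\[
    \big| S_{(n+1)N+i}^\gamma - S_{nN+i}^\gamma \big|_\infty \leq c_1^{-1} C \sup_{z\in K}\eta_n(z)
\]
is summable, so the sequence is Cauchy in $C(\Omega\times K)$, yielding uniform convergence to a continuous limit $g$. For the positivity of $|g|$: the uniform lower bound reduces the problem to verifying that $|S_{MN+i}^\gamma(\alpha,z)|$ has a positive infimum on $\Omega\times K$. This follows because $(\alpha,z)\mapsto S_{MN+i}^\gamma(\alpha,z)$ is continuous, $\Omega\times K$ is compact, and the form never vanishes: if $(u_{MN+i-1},u_{MN+i})$ were zero, the recurrence (solvable backward since every $a_n\neq 0$) would force $\alpha=0$, contradicting $\Omega\subset\CC^2\setminus\{0\}$; non-degeneracy of $\widetilde Q_{MN+i}^{z,\gamma}$ then excludes $S_{MN+i}^\gamma=0$.

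The main obstacle is the algebraic identification in the first step: one must notice that the difference appearing in Lemma~\ref{lem:1} is exactly $a_{(n+1)N+i-1}(Y_{n+1}-\overline{Y_n})$, which is what converts the hypothesis of the theorem (formulated through the twisted Stolz class for $Y_n$) into a usable Gronwall factor. Everything else—boundedness, convergence, and positivity of $|g|$—is then a standard consequence of the summability of $\eta_n^*$.
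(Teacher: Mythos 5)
Your proposal is correct and takes essentially the same route as the paper: your key inequality $\lvert S_{(n+1)N+i}^\gamma - S_{nN+i}^\gamma\rvert \le c_1^{-1}\eta_n \lvert S_{nN+i}^\gamma\rvert$ is precisely the paper's bound on $F_n = \bigl(S_{(n+1)N+i}^\gamma - S_{nN+i}^\gamma\bigr)/S_{nN+i}^\gamma$, obtained from Lemma~\ref{lem:1}, Proposition~\ref{prop:6} and hypotheses (a)--(b), and the uniform boundedness, Cauchy convergence and positivity then follow from the convergent products exactly as in the paper's $\prod_j(1+F_j)$ argument. The only cosmetic deviations are that you get non-vanishing of the base term from compactness (the paper instead invokes connectedness of $\Omega\times K$ to fix the sign), and that the vector fed into $\widetilde{Q}_{MN+i}^{z,\gamma}$ via \eqref{eq:19} should be $(u_{(M+1)N+i-1},u_{(M+1)N+i})^t$ rather than $(u_{MN+i-1},u_{MN+i})^t$ --- a harmless index slip, since the same backward-recursion argument (all $a_n\neq 0$) shows that vector cannot vanish unless $\alpha=0$.
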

\begin{proof}
We are going to show \eqref{eq:28} and the existence of $c_1 > 0$, $c_2>0$ and $M > 0$ such that
\begin{equation} \label{eq:29}
	c_1 \leq \big| S^\gamma_{nN+i}(\alpha, \lambda) \big| \leq c_2
\end{equation}
for all $\alpha \in \Omega$, $z \in K$ and $n > M$.

Given a generalised eigenvector corresponding to $z \in K$ such that $(u_0, u_1)^t = \alpha \in \Omega$,
we can easily see that each $u_n$, considered as a~function of $\alpha$ and $z$,
is continuous on $\Omega \times K$. As a~consequence, the function $S_n^\gamma$ is continuous on
$\Omega \times K$. Since $\big\{ \big( \widetilde{Q}^{z, \gamma}_{nN+i} : n \in \NN \big) : 
z \in K \big\}$ is uniformly non-degenerated, then by \eqref{eq:19} 
there is $M > 0$ such that for each $n \geq M$ the function $S_{nN+i}^\gamma$ has no zeros and has 
the same sign for all $z \in K$
and $\alpha \in \Omega$. Otherwise, by the connectedness of $\Omega \times K$, there would be 
$\alpha \in \Omega$ and $z \in K$ such that $S_{nN+i}^\gamma(\alpha, z) = 0$, which would contradict 
the non-degeneracy of $Q_{nN+i}^{z, \gamma}$. 

Thus, in order to prove \eqref{eq:28} and \eqref{eq:29}, it is enough to show that
\begin{equation} \label{eq:30}
	\sum_{n = M}^\infty \sup_{\alpha \in \Omega} \sup_{\lambda \in K} |F_n(\alpha, \lambda)| 
	< \infty,
\end{equation}
where $(F_n : n \geq M)$ is a sequence of functions on $\Omega \times K$ 
defined by
\[
	F_n = \frac{S_{(n+1)N+i}^\gamma - S_{nN+i}^\gamma}{S_{nN+i}^\gamma}.
\]
Indeed,
\begin{equation} \label{eq:31}
	\prod_{j=M}^{k-1} (1 + F_{j}) = \prod_{j=M}^{k-1} \frac{S^\gamma_{(j+1)N+i}}{S^\gamma_{jN+i}} 
	= \frac{S^\gamma_{kN+i}}{S^\gamma_{MN+i}}
\end{equation}
and the condition \eqref{eq:30} implies that the product \eqref{eq:31} is convergent uniformly
on $\Omega \times K$ to a continuous function of a definite sign. This implies \eqref{eq:29}.

It remains to prove \eqref{eq:30}. Since $\big\{ \big( \widetilde{Q}_{nN+i}^{\gamma, z} : 
n \in \NN \big) : z \in K \big\}$ is uniformly non-degenerated, we have
\[
	|S^\gamma_{nN+i}(\alpha, z)|
	\geq
	c^{-1}
	|a_{(n+1)N+i-1}| 
	\big( |u_{(n+1)N+i-1}|^2 + |u_{(n+1)N+i}|^2 \big)
\]
for all $n \geq M$, $\alpha \in \Omega$ and $\lambda \in K$. Hence, by Lemma~\ref{lem:1}
\[
	\big|F_n(\alpha, z) \big|
	\leq
	c
	\bigg\| \frac{a_{(n+2)N+i-1}}{a_{(n+1)+i-1}} X_{(n+1)N+i-1}(z) -
	\bigg( \frac{a_{(n+1)N+i-1}}{a_{nN+i-1}} \bigg)^* \overline{X_{nN+i}(z)} \bigg\|.
\]
Hence, \eqref{thm:2:eq:1} implies \eqref{eq:30}. The proof is complete.
\end{proof}

Now, we can give proofs of Theorem~\ref{thm:A} and Theorem~\ref{thm:B}.
\begin{proof}[Proof of Theorem~\ref{thm:A}]
Observe that 
\[
	\lim_{n \to \infty}
	\frac{a_{(n+1)N+i-1}}{\gamma |a_{(n+1)N+i-1}|}  
	\bigg( \frac{a_{(n+1)N+i-1}}{a_{nN+i-1}} \bigg)^* \overline{X_{nN+i}(z)}
	=
	\overline{\calX(z)} = \calX(z)
\]
uniformly with respect to $z \in K$. Hence, by Proposition~\ref{prop:5} and \ref{prop:6} the family
$\big\{ \big( \widetilde{Q}_{nN+i}^{z, \gamma} : n \in \NN \big) : z \in K \big\}$ is uniformly 
non-degenerated.

Observe that
\[
	(X_{nN+i} : n \in \NN) \in \twisD \big( K, \GL(2, \CC) \big)
\]
implies that every entry of $X_{nN+i}$ belongs to $\twisD(K, \CC)$.
We have
\[
	\frac{a_{nN+i-1}}{a_{(n+1)N+i-1}} = \det X_{nN+i}.
\]
Thus, by Proposition~\ref{prop:2}
\[
	\bigg( \frac{a_{nN+i-1}}{a_{(n+1)N+i-1}} : n \in \NN \bigg) \in \twisD(K, \CC).
\]
Since this sequence tends to $1$, again by Proposition~\ref{prop:2},
\[
	\bigg( \frac{a_{(n+1)N+i-1}}{a_{nN+i-1}} : n \in \NN \bigg) \in \twisD(K, \CC)
\]
and consequently, also
\[
	\bigg( \frac{a_{(n+1)N+i-1}}{a_{nN+i-1}} X_{nN+i} : n \in \NN \bigg) \in 
	\twisD \big(K, \GL(2, \CC) \big).
\]
Hence, the hypotheses of Theorem~\ref{thm:2} are satisfied. Finally, the conclusion follows from
Theorem~\ref{thm:1}.
\end{proof}

\begin{proof}[Proof of Theorem~\ref{thm:B}]
The conclusion follows from Theorem~\ref{thm:A} combined with Corollary~\ref{cor:1} and Proposition~\ref{prop:4}.
\end{proof}

\section{Applications} \label{seq:6}
In this section we present applications of the main results of this article. To simplify the exposition let us first introduce some notation. For any positive integer $N$, we say that a complex sequence $(x_n : n \in \NN)$ belongs to $\twisD^N(\CC)$ if for every $i \in \{0, 1, \ldots, N-1 \}$
\[
	(x_{nN+i} : n \in \NN) \in \twisD(\CC).
\] 

The following proposition will be used repeatedly in the rest of this section.
\begin{proposition} \label{prop:7}
Let $N$ a positive integer and $\gamma \in \sS^1$. Suppose that for some $i \in \{0, 1, \ldots, N-1 \}$,
\[
	\bigg( \frac{a_{nN+i-1}}{a_{nN+i}} : n \in \NN \bigg),\ 
	\bigg( \frac{b_{nN+i}}{a_{nN+i}} : n \in \NN \bigg),\ 
	\bigg( \frac{\gamma}{a_{nN+i}} : n \in \NN \bigg) \in \twisD(\CC).
\]
Then for every compact $K \subset \gamma \RR$,
\[
	(B_{nN+i} : n \in \NN) \in \twisD \big(K, \GL(2, \CC) \big).
\]
\end{proposition}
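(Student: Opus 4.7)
The plan is to reduce the claim on $(B_{nN+i})$ to an entrywise statement, exploiting that on $M_2(\CC)$ the operator norm and the entrywise $\ell^1$-norm are equivalent by \eqref{eq:56}. Recalling that
\[
    B_{nN+i}(z) =
    \begin{pmatrix}
        0 & 1 \\
        -\frac{a_{nN+i-1}}{a_{nN+i}} & \frac{z - b_{nN+i}}{a_{nN+i}}
    \end{pmatrix},
\]
the entries $(1,1)$ and $(1,2)$ are the constants $0$ and $1$, hence self-conjugate and contributing $0$ to the twisted variation, while the $(2,1)$ entry is independent of $z$ and belongs to $\twisD(\CC)$ by the first hypothesis; as a constant-in-$z$ element of $C(K,\CC)$ it therefore belongs to $\twisD(C(K,\CC))$.

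The only entry that needs real work is $(2,2)$, and here the hypothesis $K \subset \gamma\RR$ is decisive. Every $z \in K$ can be written uniquely as $z = \gamma t$ with $t \in \RR$, and $|t|$ is uniformly bounded on $K$ by compactness. Then
\[
    \frac{z - b_{nN+i}}{a_{nN+i}} = t \cdot \frac{\gamma}{a_{nN+i}} - \frac{b_{nN+i}}{a_{nN+i}},
\]
and since $t \in \RR$ one has $\overline{t \cdot \gamma / a_{nN+i}} = t \cdot \overline{\gamma / a_{nN+i}}$. The twisted increment of the $(2,2)$ entry at $z = \gamma t$ is therefore
\[
    t \Bigl( \frac{\gamma}{a_{(n+1)N+i}} - \overline{\frac{\gamma}{a_{nN+i}}} \Bigr)
    - \Bigl( \frac{b_{(n+1)N+i}}{a_{(n+1)N+i}} - \overline{\frac{b_{nN+i}}{a_{nN+i}}} \Bigr),
\]
whose supremum over $z \in K$, summed over $n$, is dominated by $\twisV(\gamma/a_{nN+i})$ and $\twisV(b_{nN+i}/a_{nN+i})$, both finite by the remaining two hypotheses.

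To conclude $(B_{nN+i}) \in \twisD(K, \GL(2,\CC))$, there remain two bookkeeping checks: uniform boundedness of $(B_{nN+i})$ on $K$, which follows from the fact that every element of $\twisD(\CC)$ is by definition bounded, combined with the uniform bound on $|t|$; and $B_{nN+i}(z) \in \GL(2,\CC)$, which is immediate from $\det B_{nN+i}(z) = a_{nN+i-1}/a_{nN+i} \neq 0$. The only non-routine step is the $(2,2)$ calculation above: it is there that one sees that the factor $\gamma$ in the hypothesis $\gamma/a_{nN+i} \in \twisD(\CC)$ is precisely the unimodular rotation needed so that complex conjugation of $z = \gamma t$ (which produces the opposite rotation $\bar\gamma$) combines cleanly with conjugation of $1/a_{nN+i}$.
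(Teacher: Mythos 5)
Your proof is correct and follows essentially the same route as the paper: writing $z=\gamma t$ with $t\in\RR$ bounded on $K$, computing the twisted increment of $B_{nN+i}$ entrywise (only the second row is nontrivial), and controlling the operator norm via the entrywise bound \eqref{eq:56} together with the three hypothesized $\twisD(\CC)$ conditions. Your explicit checks of uniform boundedness and invertibility are fine and only make explicit what the paper leaves implicit.
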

\begin{proof}
Let $K$ be a compact subset of $\gamma \RR$. Let $z \in K$, then $z = \gamma x$ for some $x \in \RR$.
Since
\[
	B_{(n+1)N+i}(z) - \overline{B_{nN+i}(z)} = 
	\begin{pmatrix}
		0 & 0 \\
		\Big( \frac{a_{nN+i-1}}{a_{nN+i}} \Big)^* - \frac{a_{(n+1)N+i-1}}{a_{(n+1)N+i}}& 
		x \Big[ \frac{\gamma}{a_{(n+1)N+i}} - \Big( \frac{\gamma}{a_{nN+i}} \Big)^* \Big] + 
		\Big( \frac{b_{nN+i}}{a_{nN+i}} \Big)^* - \frac{b_{(n+1)N+i}}{a_{(n+1)N+i}}
	\end{pmatrix},
\]
by \eqref{eq:56} we get
\[
	\| B_{(n+1)N+i}(z) - \overline{B_{nN+i}(z)} \| \leq
	\bigg| \frac{a_{(n+1)N+i-1}}{a_{(n+1)N+i}} - \bigg( \frac{a_{nN+i-1}}{a_{nN+i}} \bigg)^* \bigg| +
	|z| \bigg| \frac{\gamma}{a_{(n+1)N+i}} - \bigg( \frac{\gamma}{a_{nN+i}} \bigg)^* \bigg| +
	\bigg| \frac{b_{(n+1)N+i}}{a_{(n+1)N+i}} - \bigg( \frac{b_{nN+i}}{a_{nN+i}} \bigg)^* \bigg|.
\]
Thus, by the compactness of $K$ the result follows.
\end{proof}

\subsection{Asymptotically periodic case}
Let $N$ be a positive integer, and let $(\alpha_n : n \in \ZZ)$ and $(\beta_n : n \in \ZZ)$ be $N$-periodic complex sequences such that $\alpha_n \neq 0$ for any $n$. Let us define
\begin{equation} \label{eq:52}
	\frakX_i(x) =
	\prod_{j=i}^{N+i-1}
	\frakB_{j}(x)
	\quad \text{where} \quad
	\frakB_j(x) = 
	\begin{pmatrix}
		0 & 1 \\
		-\frac{\alpha_{j-1}}{\alpha_j} & \frac{x - \beta_j}{\alpha_j}
	\end{pmatrix}.
\end{equation}
Let $A_\per$ be the Jacobi matrix on $\ell^2(\NN_0)$ associated with the sequences 
$(\alpha_n : n \in \NN_0)$ and $(\beta_n : n \in \NN_0)$. Then $A_\per$ is bounded and
\[
	\sigmaEss(A_\per) = (\tr \frakX_0)^{-1} \big( [-2,2] \big),
\]
(see, e.g. \cite[Theorem 1]{Vazquez2000}). For a detailed study of $A_\per$, see \cite{Beckermann1997}
and \cite{Papanicolaou2019}.
It is known that $\sigmaEss(A_\per)$, as the subset of $\CC$, has empty interior and $\CC \setminus \sigmaEss(A_\per)$ is 
connected (see, e.g. \cite[Section 4.3]{Beckermann2001} or \cite[Lemma 2.5]{Beckermann1997}).

From the point of view of spectral theory it is natural to consider Jacobi matrices which are compact
perturbations of $A_\per$, that is
\[
	\lim_{n \to \infty} |a_n - \alpha_n| = 0, \quad
	\lim_{n \to \infty} |b_n - \beta_n| = 0.
\]
Let us consider the case when $\alpha_n > 0$ and $\beta_n \in \RR$. In \cite{Egorova2005}
were formulated conditions for $N=2$ assuring that the discrete spectrum of $A$ is empty. Finally, let us remark that the case of symmetric $\calA$ is well-developed, see e.g. \cite[Section 7.1]{SwiderskiTrojan2019} and the references therein.

We are ready to state our result.
\begin{corollary} \label{cor:2}
Let $N$ be a positive integer and $(\alpha_n : n \in \ZZ)$ and $(\beta_n : n \in \ZZ)$
be \emph{real} $N$-periodic sequences such that $\alpha_n > 0$ for every $n$. Suppose that the  sequences
$(a_n : n \in \NN_0)$ and $(b_n : n \in \NN_0)$ belong to $\twisD^N(\CC)$ and satisfy
\[
	\lim_{n \to \infty} |a_n - \alpha_n| = 0, \quad
	\lim_{n \to \infty} |b_n - \beta_n| = 0.
\]
Let $\frakX_0$ be defined by \eqref{eq:52} and let $K$ be a compact subset of
\[
	\{ x \in \RR : |\tr \frakX_0(x)| < 2 \}.
\]
Then there is a constant $c>1$ such that for every generalised eigenvector $u$ associated with $x \in K$
and for any $n \geq 1$
\[
	c^{-1} \big( |u_0|^2 + |u_1|^2 \big) \leq
	|u_{n-1}|^2 + |u_{n}|^2 \leq
	c \big( |u_0|^2 + |u_1|^2 \big).
\]
In particular, $\calA$ is proper, $K \cap \sigmaP(A) = \emptyset$ and 
$K \subset \sigma(A)$.
\end{corollary}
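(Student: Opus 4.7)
The plan is to verify that the hypotheses of Theorem~\ref{thm:A} and Theorem~\ref{thm:B} hold for every $i \in \{0, 1, \ldots, N-1\}$, and then to combine the $N$ subsequence bounds obtained from Theorem~\ref{thm:A} into a single bound valid for all $n$. First I would identify the limits in \eqref{eq:57}. Since $a_n - \alpha_n \to 0$ with $\alpha_n > 0$, one has $a_n/|a_n| \to 1$, so $\gamma = 1$; by continuity of the entries of $B_j(z)$ in the coefficients, $X_{nN+i} \to \mathfrak{X}_i$; and $a_{nN+i-1}/a_{(n+1)N+i-1} \to \alpha_{i-1}/\alpha_{i-1} = 1$ by the $N$-periodicity of $\alpha$.

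Next I would show $K \subset \Lambda$, where $\Lambda$ is given by \eqref{eq:59} with $\calX = \mathfrak{X}_i$. Since $\alpha_j, \beta_j \in \RR$, every entry of $\mathfrak{X}_i(x)$ is real for real $x$. A short computation gives $\det \mathfrak{B}_j = \alpha_{j-1}/\alpha_j$, so $\det \mathfrak{X}_i(x) = \prod_{j=i}^{N+i-1} \alpha_{j-1}/\alpha_j$ telescopes to $\alpha_{i-1}/\alpha_{N+i-1} = 1$ by periodicity. Moreover, writing $\mathfrak{X}_0 = QP$ and $\mathfrak{X}_i = PQ$ with $P = \mathfrak{B}_{i-1} \cdots \mathfrak{B}_0$ and $Q = \mathfrak{B}_{N-1} \cdots \mathfrak{B}_i$ (using $N$-periodicity), the cyclic property of the trace yields $\tr \mathfrak{X}_i = \tr \mathfrak{X}_0$. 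Hence $\discr \mathfrak{X}_i(x) = (\tr \mathfrak{X}_0(x))^2 - 4$, which is negative precisely when $|\tr \mathfrak{X}_0(x)| < 2$, giving $K \subset \Lambda$.

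For the twisted Stolz condition \eqref{eq:58} I would argue as follows. Since $|a_n| \to |\alpha_n|$ is bounded away from $0$ and infinity, and $(a_n), (b_n) \in \twisD^N(\CC)$, Proposition~\ref{prop:2} (inversion and multiplication) shows that for every $k$ the scalar sequences $(a_{nN+k-1}/a_{nN+k})$, $(b_{nN+k}/a_{nN+k})$ and $(1/a_{nN+k})$ belong to $\twisD(\CC)$ (the last one using $\gamma = 1$). Proposition~\ref{prop:7} then gives $(B_{nN+k})_n \in \twisD(K, \GL(2, \CC))$ for each $k \in \{0, \ldots, N-1\}$. A further application of Proposition~\ref{prop:2} to the $N$-fold product $X_{nN+i} = B_{(n+1)N+i-1} \cdots B_{nN+i}$ yields \eqref{eq:58}. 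I would then invoke Theorem~\ref{thm:A} once for each $i$, which gives the bound along the subsequence $n \mapsto nN + i$; since $|a_n|$ is bounded above and below, writing any index as $mN + (i+1)$ lets me absorb the factor $|a_{nN+i-1}|$ and pass to a uniform bound $c^{-1}(|u_0|^2 + |u_1|^2) \le |u_{n-1}|^2 + |u_n|^2 \le c(|u_0|^2 + |u_1|^2)$. The Carleman condition $\sum 1/|a_n| = \infty$ is immediate from the boundedness of $|a_n|$, so Theorem~\ref{thm:B} supplies the spectral conclusions $\calA$ proper, $K \cap \sigmaP(A) = \emptyset$ and $K \subset \sigma(A)$.

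The main obstacle is the bookkeeping needed to deduce \eqref{eq:58} from the hypothesis $(a_n), (b_n) \in \twisD^N(\CC)$: one must iterate Proposition~\ref{prop:2} through the inversions, the products in Proposition~\ref{prop:7}, and finally across the $N$ factors defining $X_{nN+i}$, while simultaneously keeping track of which index $k$ governs which $B$-factor.
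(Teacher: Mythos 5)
Your proposal is correct and follows essentially the same route as the paper: verify \eqref{eq:57}--\eqref{eq:58} for each $i$ (with $\gamma = 1$, $\calX_i = \frakX_i$, the discriminant condition reduced to $|\tr \frakX_0| < 2$, and the twisted Stolz condition obtained from Propositions~\ref{prop:2} and~\ref{prop:7}), then combine the $N$ applications of Theorem~\ref{thm:A} using that $|a_n|$ is bounded above and away from zero, and deduce the spectral statements via Theorem~\ref{thm:B}. Your small variations --- using cyclicity of the trace instead of the conjugation $\frakX_i = (\frakB_{i-1}\cdots\frakB_0)\frakX_0(\frakB_{i-1}\cdots\frakB_0)^{-1}$, and getting $a_{nN+i-1}/a_{(n+1)N+i-1} \to 1$ directly from periodicity rather than through $\det X_{nN+i}$ --- are inessential.
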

\begin{proof}
We are going to show that the hypotheses of Theorem~\ref{thm:A} are satisfied for any fixed $i \in \{0, 1, \ldots, N-1 \}$.

Observe that
\[
	\lim_{n \to \infty} \frac{a_{nN+i-1}}{|a_{nN+i-1}|} = 
	\lim_{n \to \infty} \frac{\alpha_{i-1}}{|\alpha_{i-1}|} = 1.
\]
Thus $\gamma = 1$. Let $z \in \CC$. Then for any $j \in \{0, 1, \ldots, N-1 \}$
\[
	\lim_{n \to \infty} B_{nN+j}(z) = \frakB_j(z)
\]
Hence,
\[
	\calX_i(z) := \lim_{n \to \infty} X_{nN+i}(z) = \frakX_i(z).
\]
It implies
\[
	\lim_{n \to \infty} \frac{a_{nN+i-1}}{a_{(n+1)N+i-1}} =
	\lim_{n \to \infty} \det X_{nN+i}(z) = \det \frakX_i(z) = 1
\]
and \eqref{eq:57} is satisfied. Moreover, for any $x \in K$ 
\[
	\frakX_i(x) \in \GL(2, \RR).
\]
Since
\[
	\frakX_{i} = 
	\big( \frakB_{i-1} \ldots  \frakB_0 \big) 
	\frakX_0 
	\big( \frakB_{i-1} \ldots  \frakB_0 \big)^{-1}
\]
one has $\discr \frakX_i = \discr \frakX_0$, and consequently, for any $x \in K$
\[
	\discr \calX_i(x) = \big( \tr \frakX_0(x) \big)^2 - 4 < 0.
\]

It remains to prove \eqref{eq:58}. By \eqref{eq:8}
\[
	\Big( \frac{1}{a_n} : n \in \NN \Big) \in \twisD^N(\CC).
\]
Thus, by \eqref{eq:7}
\[
	\Big( \frac{a_{n-1}}{a_n} : n \in \NN \Big),
	\Big( \frac{b_n}{a_n} : n \in \NN \Big) \in \twisD^N(\CC).
\]
By Proposition~\ref{prop:7}
\[
	(B_n : n \in \NN) \in \twisD^N \big(K, \GL(2, \CC) \big).
\]
Hence, by \eqref{eq:7}
\[
	(X_n : n \in \NN) \in \twisD^N \big(K, \GL(2, \CC) \big)
\]
and \eqref{eq:58} is proven. So the hypotheses of Theorem~\ref{thm:A} are satisfied 
and the conclusion follows.
\end{proof}

\subsection{Periodic modulations}
Let $N$ be a positive integer, and let $(\alpha_n : n \in \ZZ)$ and 
$(\beta_n : n \in \ZZ)$ be $N$-periodic complex sequences such that $\alpha_n \neq 0$ for any $n$. 
Let $\frakX_i$ be defined in \eqref{eq:52}. If the sequences $(a_n : n \in \NN_0)$ and 
$(b_n : n \in \NN_0)$ satisfy
\begin{equation} \label{eq:53}
	\lim_{n \to \infty} |a_n| = \infty, \quad
	\lim_{n \to \infty} \bigg| \frac{a_{n-1}}{a_n} - \frac{\alpha_{n-1}}{\alpha_n} \bigg| = 0 \quad\text{and} \quad
	\lim_{n \to \infty} \bigg| \frac{b_n}{a_n} - \frac{\beta_n}{\alpha_n} \bigg| = 0,
\end{equation}
then $A$ will be called a Jacobi matrix with periodically modulated entries. The case when $A$ is a 
symmetric operator is well-developed, see e.g. \cite[Section 7.2]{SwiderskiTrojan2019} and the 
references therein. It seems that there are virtually no results when $A$ is not symmetric. A
notable exception comes from the article \cite{Siegl2017}. Below we present a result in this direction.

\begin{corollary} \label{cor:3}
Let $N$ be a positive integer and let $(\alpha_n : n \in \ZZ)$ and $(\beta_n : n \in \ZZ)$ be 
\emph{real} $N$-periodic sequences such that $\alpha_n > 0$ for every $n$. Suppose that for some $\gamma \in \sS^1$
\[
	\lim_{n \to \infty} \frac{a_n}{|a_n|} = \gamma
\]
and
\begin{equation} \label{eq:55}
	\bigg( \frac{a_{n-1}}{a_n} : n \in \NN \bigg), \
	\bigg( \frac{b_n}{a_n} : n \in \NN \bigg), \
	\bigg( \frac{\gamma}{a_n} : n \in \NN \bigg) \in \twisD^N(\CC).
\end{equation}
Let \eqref{eq:53} be satisfied, $\frakX_0$ be defined by \eqref{eq:52} and $K$ be a compact subset 
of $\gamma \RR$. If $|\tr \frakX_0(0)| < 2$, then there is a constant $c>1$ such that for 
every generalised eigenvector $u$ associated with $z \in K$ and for any $n \geq 1$
\[
	c^{-1} \big( |u_0|^2 + |u_1|^2 \big) \leq
	|a_{n+N-1}| \big( |u_{n-1}|^2 + |u_{n}|^2 \big) \leq
	c \big( |u_0|^2 + |u_1|^2 \big).
\]
In particular, if
\[
	\sum_{n=0}^\infty \frac{1}{|a_n|} = \infty,
\]
then $\calA$ is proper, $\gamma \RR \cap \sigmaP(A) = \emptyset$ and $\gamma \RR \subset \sigma(A)$. 
Conversely, if
\[
	\sum_{n=0}^\infty \frac{1}{|a_n|} < \infty,
\]
then $\calA$ is improper, $\sigmaEss(A) = \emptyset$, $\sigma(A) = \CC$ and $\sigmaP(\Amax) = \CC$.
\end{corollary}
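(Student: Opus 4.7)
The approach is to verify the hypotheses of Theorem~\ref{thm:A} for every $i \in \{0,1,\ldots,N-1\}$, with the same limit $\gamma$, and then to deduce the spectral part of the corollary from Theorem~\ref{thm:B}. Fix such an $i$. The key geometric observation is that, since $|a_n| \to \infty$ by \eqref{eq:53} while $K \subset \gamma\RR$ is compact, the "spectral parameter" becomes invisible in the rescaled transfer matrix: for $z \in K$ one has $z/a_n \to 0$ uniformly on $K$, and combining this with \eqref{eq:53} yields $B_{nN+i'}(z) \to \frakB_{i'}(0)$ uniformly on $K$ for every $i' \in \{0,\ldots,N-1\}$. Consequently $X_{nN+i}(z) \to \frakX_i(0) =: \calX(z)$ uniformly on $K$, with limit independent of $z$. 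The limit $a_{nN+i-1}/|a_{nN+i-1}| \to \gamma$ is immediate, and the determinant identity $\det X_{nN+i} = a_{nN+i-1}/a_{(n+1)N+i-1}$, together with $\det \frakX_i(0) = \alpha_{i-1}/\alpha_{i+N-1} = 1$ by periodicity, gives the third limit in \eqref{eq:57}.

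Next one checks $K \subset \Lambda$. The matrix $\frakX_i(0)$ has real entries, so $\calX(z) \in \GL(2,\RR)$. The cyclic identity $\frakX_i = (\frakB_{i-1} \cdots \frakB_0)\, \frakX_0\, (\frakB_{i-1} \cdots \frakB_0)^{-1}$ (which uses only the periodicity of $\frakB$) gives $\tr \frakX_i(0) = \tr \frakX_0(0)$ and $\det \frakX_i(0) = \det \frakX_0(0) = 1$, hence $\discr \calX(z) = (\tr \frakX_0(0))^2 - 4 < 0$ by the hypothesis $|\tr \frakX_0(0)| < 2$.

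For the twisted Stolz condition \eqref{eq:58}, Proposition~\ref{prop:7} applied on each residue class (its three hypotheses being precisely \eqref{eq:55}) yields $(B_{nN+j})_{n\in\NN} \in \twisD(K, \GL(2,\CC))$ for every $j \in \{0,\ldots,N-1\}$. The transfer matrix $X_{nN+i}$ is a product of $N$ consecutive factors $B_{nN+i}, B_{nN+i+1}, \ldots, B_{nN+i+N-1}$, and each such factor sequence agrees, up to an index shift by at most one (which preserves $\twisD$-membership), with one of the sequences just handled. An iterated application of Proposition~\ref{prop:2}, using that each factor is uniformly bounded on $K$ by its uniform convergence to $\frakB_j(0)$, gives $(X_{nN+i}) \in \twisD(K, \GL(2,\CC))$. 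Thus Theorem~\ref{thm:A} applies for every $i$ and yields the bound $c^{-1}(|u_0|^2+|u_1|^2) \leq |a_{mN+i-1}|(|u_{mN+i-1}|^2+|u_{mN+i}|^2) \leq c(|u_0|^2+|u_1|^2)$. Taking the union over $i$ (so the index $mN+i$ runs over all $n \geq 1$) and using that $|a_{n+N-1}|/|a_{n-1}| \to 1$ (again from the determinant computation), one converts this to the desired two-sided bound on $|a_{n+N-1}|(|u_{n-1}|^2+|u_n|^2)$, adjusting the constant for the finitely many small indices.

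The spectral conclusions then follow from Theorem~\ref{thm:B}. If $\sum 1/|a_n| = \infty$, a pigeonhole argument on residue classes gives $\sum 1/|a_{nN+i-1}| = \infty$ for some $i$, so Theorem~\ref{thm:B} yields $\calA$ proper, $K \cap \sigmaP(A) = \emptyset$ and $K \subset \sigma(A)$ for every admissible compact $K$; exhausting $\gamma\RR$ by such $K$ and using that $\sigma(A)$ is closed gives $\gamma\RR \cap \sigmaP(A) = \emptyset$ and $\gamma\RR \subset \sigma(A)$. If $\sum 1/|a_n| < \infty$, the hypotheses of the second alternative of Theorem~\ref{thm:B} hold for every $i$, and one reads off the improper conclusions directly. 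The only non-mechanical step is the twisted Stolz computation for the product $X_{nN+i}$, where the mixing of residues mod $N$ among the factors must be tracked carefully through Proposition~\ref{prop:2}; all remaining verifications are limit computations based on \eqref{eq:53} and the compactness of $K$.
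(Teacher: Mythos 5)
Your proposal is correct and takes essentially the same route as the paper: fix $i$, use \eqref{eq:53} and $|a_n|\to\infty$ (so $z/a_n\to 0$ uniformly on $K$) to get $X_{nN+i}(z)\to\frakX_i(0)$ with $\discr\frakX_i(0)=(\tr\frakX_0(0))^2-4<0$ via conjugation by $\frakB_{i-1}\cdots\frakB_0$, obtain \eqref{eq:58} from Proposition~\ref{prop:7} together with Proposition~\ref{prop:2}, and then invoke Theorem~\ref{thm:A} (and Theorem~\ref{thm:B} for the spectral statements). Your extra bookkeeping --- the residue-shift tracking in the product, the passage from the weight $|a_{nN+i-1}|$ to $|a_{n+N-1}|$, the pigeonhole for the divergent series, and the exhaustion of $\gamma\RR$ by compacts --- simply makes explicit steps the paper leaves implicit.
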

\begin{proof}
We are going to show that the hypotheses of Theorem~\ref{thm:A} are satisfied. Let us fix 
$i \in \{0, 1, \ldots, N-1 \}$.

Let $z \in \CC$. Then for any $j \in \{0, 1, \ldots, N-1 \}$
\[
	\lim_{n \to \infty} B_{nN+j}(z) = \frakB_j(0)
\]
Hence,
\[
	\calX_i(z) := \lim_{n \to \infty} X_{nN+i}(z) = \frakX_i(0).
\]
It implies
\[
	\lim_{n \to \infty} \frac{a_{nN+i-1}}{a_{(n+1)N+i-1}} =
	\lim_{n \to \infty} \det X_{nN+i}(z) = \det \frakX_i(0) = 1
\]
and \eqref{eq:57} is satisfied. Moreover,
\[
	\calX_i(z) = \frakX_i(0) \in \GL(2, \RR)
\]
and similarly as in the proof of Corollary~\ref{cor:2}
\[
	\discr \calX_i(x) = \big( \tr \frakX_0(0) \big)^2 - 4 < 0.
\]

It remains to prove \eqref{eq:58}. By Proposition~\ref{prop:7}
\[
	(B_n : n \in \NN) \in \twisD^N \big(K, \GL(2, \CC) \big).
\]
Hence, by \eqref{eq:7}
\[
	(X_n : n \in \NN) \in \twisD^N \big(K, \GL(2, \CC) \big)
\]
and \eqref{eq:58} is proven. So the hypotheses of Theorem~\ref{thm:A} are satisfied 
and the conclusion follows.
\end{proof}

The following proposition gives a simple method of construction sequences satisfying hypotheses of
Corollary~\ref{cor:3}.

\begin{proposition} \label{prop:9}
Let $N$ be a positive integer and $\gamma \in \sS^1$. Let $(\alpha_n : n \in \ZZ)$ and 
$(\beta_n : n \in \ZZ)$ be $N$-periodic sequences of positive and real numbers, respectively. Suppose
we are given a complex sequence $(\tilde{a}_n : n \in \NN_0)$ such that $\tilde{a}_n \neq 0$ for all $n$, and
\begin{equation} \label{eq:72}
	\lim_{n \to \infty} |\tilde{a}_n| = \infty, \qquad
	\lim_{n \to \infty} \frac{\tilde{a}_n}{|\tilde{a}_n|} = \gamma, \qquad
	\lim_{n \to \infty} \frac{\tilde{a}_{n-1}}{\tilde{a}_n} = 1
\end{equation}
and
\begin{equation} \label{eq:71}
	\bigg( \frac{\tilde{a}_{n-1}}{\tilde{a}_n} : n \in \NN \bigg) \in \twisD^N(\CC).
\end{equation}
Set
\[
	a_n = \alpha_n \tilde{a}_n, \qquad 
	b_n = \beta_n \tilde{a}_n.
\]
If $|\frakX_0(0)|<2$, where $\frakX_0$ is defined in \eqref{eq:52}, then the Jacobi matrix corresponding
to the sequences $(a_n : n \in \NN_0)$ and $(b_n : n \in \NN_0)$ satisfy the hypotheses of 
Corollary~\ref{cor:3}.
\end{proposition}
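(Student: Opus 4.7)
\emph{Proof sketch.} The plan is to verify every hypothesis of Corollary~\ref{cor:3} for the pair $(a_n)$, $(b_n)$ and then apply that corollary. Since $\alpha_n>0$ is $N$-periodic, hence bounded below by a positive constant, the pointwise conditions are immediate: $a_n/|a_n|=\tilde a_n/|\tilde a_n|\to\gamma$ and $|a_n|=\alpha_n|\tilde a_n|\to\infty$; the identity $b_n/a_n=\beta_n/\alpha_n$ makes the corresponding difference in \eqref{eq:53} vanish exactly; and
\[
\frac{a_{n-1}}{a_n} - \frac{\alpha_{n-1}}{\alpha_n}
= \frac{\alpha_{n-1}}{\alpha_n}\bigg(\frac{\tilde a_{n-1}}{\tilde a_n}-1\bigg)\longrightarrow 0
\]
by \eqref{eq:72}. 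The spectral condition $|\tr\frakX_0(0)|<2$ is assumed.

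For the twisted Stolz conditions \eqref{eq:55}, the key observation is that any real $N$-periodic sequence is constant on each residue class modulo $N$ and therefore lies trivially in $\twisD^N(\CC)$ (its twisted total variation on each mod-$N$ subsequence is $0$). Combining such periodic factors with the hypothesis \eqref{eq:71} via the product estimate \eqref{eq:7} of Proposition~\ref{prop:2} yields
\[
\frac{a_{n-1}}{a_n}=\frac{\alpha_{n-1}}{\alpha_n}\cdot\frac{\tilde a_{n-1}}{\tilde a_n}\in\twisD^N(\CC),\qquad
\frac{b_n}{a_n}=\frac{\beta_n}{\alpha_n}\in\twisD^N(\CC).
\]
For the third condition of \eqref{eq:55}, write $\gamma/a_n=(1/\alpha_n)(\gamma/\tilde a_n)$; the same product argument reduces the task to showing $(\gamma/\tilde a_n)\in\twisD^N(\CC)$.

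This last reduction is the main step, and I expect it to be the principal obstacle. The plan is to rewrite
\[
\frac{\gamma}{\tilde a_{(n+1)N+i}} - \overline{\frac{\gamma}{\tilde a_{nN+i}}}
= \frac{\gamma\,\overline{\tilde a_{nN+i}} - \overline{\gamma}\,\tilde a_{(n+1)N+i}}{\tilde a_{(n+1)N+i}\,\overline{\tilde a_{nN+i}}},
\]
then, using the polar decomposition $\tilde a_k=|\tilde a_k|\gamma(1+\eta_k)$ with $\eta_k\to 0$ supplied by \eqref{eq:72}, split the numerator into a real telescoping piece $|\tilde a_{nN+i}|-|\tilde a_{(n+1)N+i}|$ (which, after division by $|\tilde a_{nN+i}|\cdot|\tilde a_{(n+1)N+i}|$, contributes the convergent telescope $\sum(1/|\tilde a_{(n+1)N+i}|-1/|\tilde a_{nN+i}|)$ bounded by $1/|\tilde a_i|$) plus remainder terms linear in $\eta_k$ and in $(\tilde a_{k-1}/\tilde a_k-1)$. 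The delicate part is to show that the phase-error remainders $\eta_k/|\tilde a_k|$ are absorbed by the twisted total variation guaranteed by \eqref{eq:71}, invoking Proposition~\ref{prop:2} to pass from $(\tilde a_{n-1}/\tilde a_n)$ to the multi-step ratios $(\tilde a_{nN+i}/\tilde a_{(n+1)N+i})$. Once $(\gamma/\tilde a_n)\in\twisD^N(\CC)$ is secured, all hypotheses of Corollary~\ref{cor:3} are in place, and Corollary~\ref{cor:3} delivers the conclusion.
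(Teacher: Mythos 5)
Your verification of the easy conditions is correct and coincides with the paper's proof: the identities $a_{n-1}/a_n=(\alpha_{n-1}/\alpha_n)(\tilde a_{n-1}/\tilde a_n)$, $b_n/a_n=\beta_n/\alpha_n$, $\gamma/a_n=(1/\alpha_n)(\gamma/\tilde a_n)$ (these are exactly \eqref{eq:73}), Proposition~\ref{prop:2} and \eqref{eq:72} give \eqref{eq:53} and the first two memberships in \eqref{eq:55}. The genuine gap is precisely the step you flag yourself: the membership $(\gamma/a_n)\in\twisD^N(\CC)$, equivalently $(\gamma/\tilde a_n)\in\twisD^N(\CC)$, is only sketched, and the sketch cannot be completed from \eqref{eq:72} and \eqref{eq:71} alone. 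Two things fail. First, your ``real telescoping piece'' actually requires $\sum_n\big||\tilde a_{(n+1)N+i}|^{-1}-|\tilde a_{nN+i}|^{-1}\big|<\infty$, i.e.\ bounded variation of $1/|\tilde a_{nN+i}|$, which is more than $|\tilde a_n|\to\infty$. Second, and decisively, the phase errors are not controlled: \eqref{eq:72} only gives $\eta_k\to0$ with no rate, and \eqref{eq:71} controls differences of consecutive ratios, not the size of the phases, so $\sum_n|\eta_{nN+i}|/|\tilde a_{nN+i}|$ may diverge. Concretely, take $N=1$, $\alpha_n\equiv1$, $\beta_n\equiv0$, $\tilde a_n=(n+2)e^{i\theta_n}$ with $\theta_n=1/\log(n+2)$; then $\gamma=1$, \eqref{eq:72} holds, and \eqref{eq:71} holds since the modulus increments are $O(n^{-2})$ and $|\theta_{n+1}-\theta_{n-1}|=O\big(n^{-1}\log^{-2}n\big)$, yet
\[
	\bigg|\frac{\gamma}{\tilde a_{n+1}}-\overline{\bigg(\frac{\gamma}{\tilde a_{n}}\bigg)}\bigg|
	\geq \frac{\sin\theta_{n+1}}{n+3}+\frac{\sin\theta_n}{n+2}\geq\frac{c}{n\log n},
\]
so $(\gamma/\tilde a_n)\notin\twisD(\CC)$ and the third membership in \eqref{eq:55} fails. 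Hence no argument along your lines can derive that membership from the stated hypotheses; the implication itself needs more input.

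For comparison, the paper's own proof is equally brief at exactly this point: it deduces \eqref{eq:55} from Proposition~\ref{prop:2} applied to \eqref{eq:71} and the factorizations \eqref{eq:73}, which presupposes $(\gamma/\tilde a_n)\in\twisD^N(\CC)$ rather than deriving it; in the paper's intended application (the Example, $\tilde a_n=(n+1)^\lambda>0$, $\gamma=1$) this is immediate because $1/\tilde a_n$ is real, positive and monotone. So you correctly isolated the real difficulty, but your proposal does not close it, and the example above shows it cannot be closed without strengthening the hypotheses --- e.g.\ assuming in addition $(\gamma/\tilde a_n)\in\twisD^N(\CC)$, or a quantitative phase condition such as $\sum_n|\Im(\overline{\gamma}\,\tilde a_n)|/|\tilde a_n|^2<\infty$ together with bounded variation of $1/|\tilde a_{nN+i}|$, or simply restricting to $\tilde a_n$ with $\overline{\gamma}\,\tilde a_n>0$.
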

\begin{proof}
The condition \eqref{eq:55} follows from Proposition~\ref{prop:2} applied to \eqref{eq:71} and
\begin{equation} \label{eq:73}
	\frac{a_{n-1}}{a_n} = \frac{\alpha_{n-1}}{\alpha_n} \frac{\tilde{a}_{n-1}}{\tilde{a}_n}, \qquad
	\frac{b_n}{a_n} = \frac{\beta_n}{\alpha_n}, \qquad
	\frac{\gamma}{a_n} = \frac{1}{\alpha_n} \frac{\gamma}{\tilde{a}_n}.
\end{equation}
Finally, condition \eqref{eq:53} follows from \eqref{eq:72}, \eqref{eq:73} and
\[
	|a_n| = \alpha_n |\tilde{a}_n|, \qquad
	\frac{a_n}{|a_n|} = \frac{\tilde{a}_n}{|\tilde{a}_n|}.
\]
The proof is complete.
\end{proof}

\subsection{Additive perturbations}
\begin{proposition} \label{prop:8}
Suppose that the Jacobi matrix $A$ satisfies the hypotheses of Corollary~\ref{cor:3}. 
Let $(x_n : n \in \NN_0)$ and $(y_n : n \in \NN_0)$ be sequences such that
\begin{equation} \label{eq:60}
	\bigg( \frac{x_n}{a_n} : n \in \NN \bigg),
	\bigg( \frac{y_n}{a_n} : n \in \NN \bigg) \in \twisD^N(\CC)
\end{equation}
and
\begin{equation} \label{eq:54}
	\lim_{n \to \infty} \frac{x_n}{a_n} = 0, \qquad 
	\lim_{n \to \infty} \frac{y_n}{a_n} = 0.
\end{equation}
Define 
\[
	\tilde{a}_n = a_n + x_n, \qquad \tilde{b}_n = b_n + y_n.
\]
If $\tilde{a}_n \neq 0$ for all $n$, then the Jacobi matrix corresponding to the sequences 
$(\tilde{a}_n : n \geq 0)$ and $(\tilde{b}_n : n \geq 0)$ satisfy the hypotheses of Corollary~\ref{cor:3}
for the same $\alpha, \beta$ and $\gamma$.
\end{proposition}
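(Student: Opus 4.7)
The plan is to verify, one by one, the hypotheses of Corollary~\ref{cor:3} for the new sequences $(\tilde a_n)$ and $(\tilde b_n)$, keeping the same periodic data $(\alpha_n)$, $(\beta_n)$ and the same $\gamma$. The assumption $|\tr\frakX_0(0)|<2$ is then automatic because $\frakX_0$ depends only on $\alpha_n,\beta_n$. Everything else rests on the single algebraic identity $\tilde a_n = a_n(1 + x_n/a_n)$ and a repeated application of Propositions~\ref{prop:1} and~\ref{prop:2}.

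First I would observe that the constant sequence $1$ lies in $\twisD^N(\CC)$ with zero twisted total variation (since $1=\overline{1}$), so by Proposition~\ref{prop:1}(a) and~\eqref{eq:60} the sequence $(1 + x_n/a_n)$ belongs to $\twisD^N(\CC)$. By~\eqref{eq:54} it tends to $1$, hence is eventually bounded away from zero; combined with the hypothesis $\tilde a_n\neq 0$ for every $n$, Proposition~\ref{prop:2} yields
\[
\bigg(\frac{a_n}{\tilde a_n} : n \in \NN\bigg) = \bigg(\big(1 + x_n/a_n\big)^{-1} : n \in \NN\bigg) \in \twisD^N(\CC),
\]
with limit $1$. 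In particular $|\tilde a_n|\to\infty$ and $\tilde a_n/|\tilde a_n|\to\gamma$. The three sequences appearing in~\eqref{eq:55} for the new matrix then factor as products of sequences already in $\twisD^N(\CC)$:
\[
\frac{\gamma}{\tilde a_n}=\frac{\gamma}{a_n}\cdot\frac{a_n}{\tilde a_n},\qquad
\frac{\tilde b_n}{\tilde a_n}=\bigg(\frac{b_n}{a_n}+\frac{y_n}{a_n}\bigg)\frac{a_n}{\tilde a_n},\qquad
\frac{\tilde a_{n-1}}{\tilde a_n}=\bigg(\frac{a_{n-1}}{a_n}+\frac{x_{n-1}}{a_{n-1}}\cdot\frac{a_{n-1}}{a_n}\bigg)\frac{a_n}{\tilde a_n},
\]
and Proposition~\ref{prop:2}(i) closes the verification. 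The only non-routine ingredient here is that the shifted sequence $(x_{n-1}/a_{n-1} : n \in \NN)$ belongs to $\twisD^N(\CC)$. This is pure bookkeeping modulo $N$: the subsequence of $(x_{n-1}/a_{n-1})$ along residue $i$ coincides, up to a single index shift, with the subsequence of $(x_n/a_n)$ along residue $i-1 \bmod N$, which is in $\twisD(\CC)$ by~\eqref{eq:60}.

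Finally, for~\eqref{eq:53} I would write
\[
\frac{\tilde a_{n-1}}{\tilde a_n} - \frac{\alpha_{n-1}}{\alpha_n} = \frac{\big[(a_{n-1}/a_n)-(\alpha_{n-1}/\alpha_n)\big] + (x_{n-1}/a_n) - (\alpha_{n-1}/\alpha_n)(x_n/a_n)}{1 + x_n/a_n},
\]
and read off convergence to $0$: the first bracket tends to $0$ by~\eqref{eq:53} for the original matrix, $x_n/a_n\to 0$ by~\eqref{eq:54}, $x_{n-1}/a_n\to 0$ because $(a_{n-1}/a_n)$ is bounded and $x_{n-1}/a_{n-1}\to 0$, and the denominator tends to $1$. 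The analogous calculation for $\tilde b_n/\tilde a_n - \beta_n/\alpha_n$ is even shorter. The main (and mild) obstacle is the residue bookkeeping for the shift from $(x_n/a_n)$ to $(x_{n-1}/a_{n-1})$; the remaining manipulations are straightforward applications of the product, inversion and sum rules for $\twisD^N(\CC)$.
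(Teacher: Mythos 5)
Your proposal is correct and follows essentially the same route as the paper: write $\tilde a_n/a_n = 1 + x_n/a_n$, use Propositions~\ref{prop:1} and~\ref{prop:2} to get $(a_n/\tilde a_n)\in\twisD^N(\CC)$ with limit $1$, and then verify \eqref{eq:55} and \eqref{eq:53} through the same product factorisations (your expanded form of $\tilde a_{n-1}/\tilde a_n$ is just the paper's $\frac{\tilde a_{n-1}}{a_{n-1}}\frac{a_{n-1}}{a_n}\frac{a_n}{\tilde a_n}$ written out, and your quotient identity for the limits is a cosmetic variant of deducing them from those factorisations). Your explicit remark on the residue bookkeeping for the shifted sequence $(x_{n-1}/a_{n-1})$ is a point the paper passes over silently, and it is handled correctly.
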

\begin{proof}
We have
\[
	\frac{\tilde{a}_n}{a_n} = 1 + \frac{x_n}{a_n}, \qquad 
	\frac{\tilde{b}_n}{a_n} = \frac{b_n}{a_n} + \frac{y_n}{a_n}.
\]
Hence, by \eqref{eq:60} and Proposition~\ref{prop:1}
\[
	\bigg( \frac{\tilde{a}_n}{a_n} : n \in \NN \bigg), 
	\bigg( \frac{\tilde{b}_n}{a_n} : n \in \NN \bigg) \in \twisD^N(\CC)
\]
Moreover, by \eqref{eq:54}
\begin{equation} \label{eq:61}
	\lim_{n \to \infty} \frac{\tilde{a}_n}{a_n} = 1, \qquad
	\lim_{n \to \infty} \bigg| \frac{\tilde{b}_n}{a_n} - \frac{\beta_n}{\alpha_n} \bigg| = 0.
\end{equation}
Therefore, by Proposition~\ref{prop:2}
\[
	\bigg( \frac{a_n}{\tilde{a}_n} : n \in \NN \bigg) \in \twisD^N(\CC).
\] 
Thus, the condition \eqref{eq:55} is implied by
\begin{equation} \label{eq:43}
	\frac{\tilde{a}_{n-1}}{\tilde{a}_{n}} = 
	\frac{\tilde{a}_{n-1}}{a_{n-1}} \frac{a_{n-1}}{a_n} \frac{a_n}{\tilde{a}_{n}}, \qquad
	\frac{\tilde{b}_n}{\tilde{a}_n} = 
	\frac{a_n}{\tilde{a_n}} \frac{\tilde{b}_n}{a_n}, \qquad
	\frac{\gamma}{\tilde{a}_n} = 
	\frac{a_n}{\tilde{a}_n} \frac{\gamma}{a_n}
\end{equation}
and Proposition~\ref{prop:2}. Finally, condition \eqref{eq:53} follows from \eqref{eq:61}, \eqref{eq:43}
and 
\[
	|\tilde{a}_n| = |a_n| \bigg| \frac{\tilde{a}_n}{a_n} \bigg|, \qquad 
	\frac{\tilde{a}_n}{|\tilde{a}_n|} = 
	\frac{\tilde{a}_n}{a_n} \frac{a_n}{|a_n|} \bigg| \frac{a_n}{\tilde{a}_n} \bigg|.
\]
The proof is complete.
\end{proof}

The following corollary concerns complex perturbations of real Jacobi matrices.
\begin{corollary} \label{cor:4}
Suppose that $(a_n : n \in \NN_0)$ and $(b_n : n \in \NN_0)$ satisfy the hypotheses of 
Corollary~\ref{cor:3}. Suppose that $a_n > 0$ and $b_n \in \RR$ for every $n$. Moreover, let the 
real sequences $(x_n : n \geq 0)$ and $(y_n : n \geq 0)$ are such that
\[
	\sum_{n=0}^\infty \bigg| \frac{x_{n+N}}{a_{n+N}} - \frac{x_{n}}{a_{n}} \bigg| +
	\sum_{n=0}^\infty \bigg| \frac{y_{n+N}}{a_{n+N}} - \frac{y_{n}}{a_{n}} \bigg| < \infty
\]
and
\[
	\lim_{n \to \infty} \frac{x_n}{a_n} = 0, \quad
	\lim_{n \to \infty} \frac{y_n}{a_n} = 0.
\]
Define 
\[
	\tilde{a}_n = a_n + i \epsilon_n x_n, \quad
	\tilde{b}_n = b_n + i \epsilon_n y_n,
\]
where $\epsilon_n = (-1)^{\lfloor n/N \rfloor}$. Then the sequences $(\tilde{a}_n : n \geq 0)$ and 
$(\tilde{b}_n : n \geq 0)$ again satisfy the assumptions of Corollary~\ref{cor:3} for the same 
$\alpha, \beta$ and~$\gamma$.
\end{corollary}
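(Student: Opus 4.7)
The plan is to reduce the statement to Proposition~\ref{prop:8} applied to the perturbation sequences $\tilde{x}_n := i\epsilon_n x_n$ and $\tilde{y}_n := i\epsilon_n y_n$, so that the new coefficients $\tilde{a}_n = a_n + \tilde{x}_n$ and $\tilde{b}_n = b_n + \tilde{y}_n$ fit exactly into its framework. The first things to check are cheap: $\tilde{a}_n \neq 0$ holds because $a_n > 0$ is real while $\tilde{x}_n$ is purely imaginary, so $\Re \tilde{a}_n = a_n > 0$; and $\tilde{x}_n/a_n \to 0$ (together with $\tilde{y}_n/a_n \to 0$) is immediate from $|\tilde{x}_n/a_n| = |x_n|/a_n$ and the decay hypothesis.

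The only substantive step is to verify the twisted Stolz condition $(\tilde{x}_n/a_n), (\tilde{y}_n/a_n) \in \twisD^N(\CC)$ required by Proposition~\ref{prop:8}. I would fix $i \in \{0, 1, \ldots, N-1\}$ and analyse $v_n := \tilde{x}_{nN+i}/a_{nN+i}$. The key observation is that $\epsilon_{nN+i} = (-1)^n$ (since $i < N$), so
\[
v_n = i(-1)^n \frac{x_{nN+i}}{a_{nN+i}},
\]
which is purely imaginary because $x_n, a_n \in \RR$. A direct calculation, using that conjugation negates a purely imaginary number while $(-1)^n$ itself flips when $n$ is incremented, yields the clean identity
\[
v_{n+1} - \overline{v_n} = i(-1)^{n+1} \left[ \frac{x_{(n+1)N+i}}{a_{(n+1)N+i}} - \frac{x_{nN+i}}{a_{nN+i}} \right].
\]
Summing over $n$ and splitting the assumption $\sum_{n} |x_{n+N}/a_{n+N} - x_n/a_n| < \infty$ into residue classes modulo $N$ produces the required twisted total variation bound; the argument for $\tilde{y}_n/a_n$ is identical.

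The main (and only mildly subtle) obstacle is the sign-cancellation observation above: one has to recognise that the twist $\overline{\,\cdot\,}$ built into the definition of $\twisD$ combines with the factor $(-1)^n$ coming from $\epsilon_{nN+i}$ so that a \emph{twisted} total variation reduces to an \emph{ordinary} one on each residue class. This is precisely why the perturbation has to be introduced with the alternating sign $\epsilon_n$ rather than with a constant sign—otherwise the twist and the imaginary unit would conspire additively rather than cancel. Once this point is in hand, Proposition~\ref{prop:8} delivers all the hypotheses of Corollary~\ref{cor:3} for the perturbed coefficients with the same $\alpha$, $\beta$ and $\gamma$, which completes the argument.
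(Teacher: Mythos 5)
Your proposal is correct and follows essentially the same route as the paper: both reduce to Proposition~\ref{prop:8} by observing that for the purely imaginary, sign-alternating perturbations $i\epsilon_n x_n$, $i\epsilon_n y_n$ the conjugation in the definition of $\twisV$ cancels against the flip of $\epsilon$ along each residue class mod $N$, so the twisted total variation of $(i\epsilon_{nN+j}x_{nN+j}/a_{nN+j})$ equals the ordinary variation assumed in the hypothesis. Your additional remarks (why $\tilde a_n\neq 0$, and that $|\tilde x_n/a_n|=|x_n|/a_n\to 0$) are correct and only make explicit what the paper leaves implicit.
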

\begin{proof}
Let $j \in \{ 0, 1, \ldots, N-1 \}$. Since the sequences $a$ and $x$ are real valued, one has
\[
	\twisV \bigg( \frac{i \epsilon_{nN+j} x_{nN+j}}{a_{nN+j}} : n \in \NN \bigg) = 
	\sum_{n=1}^\infty 
	\bigg|i \epsilon_{(n+1)N+j} 
	\bigg( \frac{x_{(n+1)N+j}}{a_{(n+1)N+j}} - \frac{x_{nN+j}}{a_{nN+j}} \bigg) \bigg| =
	\sum_{n=1}^\infty \bigg| \frac{x_{(n+1)N+j}}{a_{(n+1)N+j}} - \frac{x_{nN+j}}{a_{nN+j}} \bigg| < \infty
\]
and similarly
\[
	\twisV \bigg( \frac{i \epsilon_{nN+j} y_{nN+j}}{a_{nN+j}} : n \in \NN \bigg) < \infty.
\]
Hence, the conclusion follows from Proposition~\ref{prop:8}.
\end{proof}

Let us illustrate Corollary~\ref{cor:4} with the following example.
\begin{example}
Let $N$ be a positive integer and let $(\alpha_n : n \in \ZZ)$ and $(\beta_n : n \in \ZZ)$ be
$N$-periodic sequences of positive and real numbers, respectively. Suppose that $|\tr \frakX_0(0)| < 2$,
where $\frakX_0$ is defined in \eqref{eq:52}. Let $0 \leq \mu < \lambda$. 
Define
\[
	a_n = \alpha_n (n+1)^\lambda + i (-1)^{\lfloor n/N \rfloor} (n+1)^\mu, \quad
	b_n = \beta_n (n+1)^\lambda + i (-1)^{\lfloor n/N \rfloor} (n+1)^\mu.
\]
Then for any compact $K \subset \RR$ there is a constant $c>1$ such that for 
every generalised eigenvector $u$ associated with $z \in K$ and for any $n \geq 1$
\[
	c^{-1} \big( |u_0|^2 + |u_1|^2 \big) \leq
	|a_{n+N-1}| \big( |u_{n-1}|^2 + |u_{n}|^2 \big) \leq
	c \big( |u_0|^2 + |u_1|^2 \big).
\]
To prove this claim let us observe that the Proposition~\ref{prop:9} applied to
\[
	\tilde{a}_n = (n+1)^\lambda
\]
implies that the sequences
\[
	\hat{a}_n = \alpha_n (n+1)^\lambda, \qquad
	\hat{b}_n = \beta_n (n+1)^\lambda
\]
satisfy the assumptions of Corollary~\ref{cor:3}. Let
\[
	x_n = y_n = (n+1)^\mu.
\]
Since
\[
	\frac{x_n}{\hat{a}_n} = \frac{1}{\alpha_n} \frac{1}{(n+1)^{\lambda-\mu}}
\]
it satisfies conditions \eqref{eq:60} and \eqref{eq:54}. Therefore, the conclusion follows from 
Corollary~\ref{cor:4}.
\end{example}

\subsection{Blend}
Let $N$ be a positive integer, and let $(\alpha_n : n \in \ZZ)$ and $(\beta_n : n \in \ZZ)$ be 
$N$-periodic sequences of complex numbers such that $\alpha_n \neq 0$ for any $n$. Suppose that 
sequences $\tilde{a}, \tilde{b}, \tilde{c}$ and $\tilde{d}$ satisfy
\begin{equation} \label{eq:62}
	\lim_{n \to \infty} \big|\tilde{a}_n - \alpha_n\big| = 0, \qquad 
	\lim_{n \to \infty} \big|\tilde{b}_n - \beta_n\big| = 0, \qquad
	\lim_{n \to \infty} |\tilde{c}_n| = \infty
\end{equation}
and for every $n$ one has $\tilde{a}_n \neq 0$ and $\tilde{c}_n \neq 0$. 
For $k \geq 0$ and $i \in \{0, 1, \ldots, N+1 \}$, we define
\begin{equation} \label{eq:63}
		a_{k(N+2)+i} = 
		\begin{cases}
			\tilde{a}_{kN+i} & \text{if } i \in \{0, 1, \ldots, N-1\}, \\
			\tilde{c}_{2k} & \text{if } i = N, \\
			\tilde{c}_{2k+1} & \text{if } i = N+1,
		\end{cases}, \qquad
		b_{k(N+2)+i} = 
		\begin{cases}
			\tilde{b}_{kN+i} & \text{if } i \in \{0, 1, \ldots, N-1\}, \\
			\tilde{d}_{2k} & \text{if } i = N, \\
			\tilde{d}_{2k+1} & \text{if } i = N+1
		\end{cases}.
\end{equation}

Jacobi matrix $A$ will be called \emph{$N$-periodic blend} if it satisfies \eqref{eq:62} and 
\eqref{eq:63}. This class of matrices has been studied previously in \cite[Theorem 5]{Janas2011},
\cite[Section 7.3]{SwiderskiTrojan2019} and \cite{ChristoffelI} but only in the case when $A$ is self-adjoint.

\begin{corollary}
Let Jacobi matrix $A$ be $N$-periodic blend, where $\alpha_n > 0$ and $\beta_n \in \RR$ for any $n$. 
Suppose that
\begin{equation} \label{eq:64}
	\lim_{n \to \infty} \frac{\tilde{c}_{2n}}{\tilde{c}_{2n+1}} = 1, \qquad
	\lim_{n \to \infty} \frac{\tilde{c}_n}{|\tilde{c}_n|} = 1, \qquad
	\lim_{n \to \infty} \tilde{d}_{2n} = \delta,
\end{equation}
and
\begin{equation} \label{eq:65}
	\big( 1/a_n : n \in \NN \big), 
	\big( b_n : n \in \NN \big) \in \twisD^{N+2}(\CC) \quad \text{and} \quad
	\bigg( \frac{a_{n(N+2)+N}}{a_{n(N+2)+N+1}} : n \in \NN \bigg) \in \twisD(\CC).
\end{equation}
For $i \in \{1, 2, \ldots, N \}$ we set
\[
	\calX_i(z) = 
	\bigg\{ \prod_{j=1}^{i-1} \frakB_j(z) \bigg\} 
	\calC(z) 
	\bigg\{ \prod_{j=i}^{N-1} \frakB_j(z) \bigg\},
\] 
where $\frakB_j$ is defined in \eqref{eq:52} and
\[
	\calC(z) =
	\begin{pmatrix}
		0 & -1 \\
		-\frac{\alpha_{N-1}}{\alpha_0} & -\frac{2z - \beta_0 - \delta}{\alpha_0}
	\end{pmatrix}.
\] 
Let $K$ be a compact subset of
\[
	\big\{ x \in \RR : | \tr \calX_1(x) | < 2 \big\}.
\]
Then there is a constant $c>1$ such that for any generalised eigenvector associated with $z \in K$, 
any $n \geq 1$ and any $i \in \{1, 2, \ldots, N \}$
\begin{equation} \label{eq:70}
	c^{-1} \big( |u_0|^2 + |u_1|^2 \big) \leq
	|u_{n(N+2)+i-1}|^2 + |u_{n(N+2)+i}|^2 \leq
	c \big( |u_0|^2 + |u_1|^2 \big).	
\end{equation}
In particular, $\calA$ is proper, $K \cap \sigmaP(A) = \emptyset$ and $K \subset \sigma(A)$.
\end{corollary}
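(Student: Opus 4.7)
The plan is to apply Theorem~\ref{thm:A} (with period $N+2$) separately for each $i \in \{1, 2, \ldots, N\}$. The two scalar convergences in \eqref{eq:57} are immediate from $a_{n(N+2)+i-1} = \tilde{a}_{nN+i-1} \to \alpha_{i-1} > 0$, giving $\gamma = 1$ and $a_{n(N+2)+i-1}/a_{(n+1)(N+2)+i-1} \to 1$. For the matrix convergence I would decompose the product of $N+2$ transfer matrices as $X_{n(N+2)+i}(z) = E_n(z) \calC_n(z) L_n(z)$, where $L_n(z) = B_{n(N+2)+N-1}(z) \cdots B_{n(N+2)+i}(z)$ is the tail of ``normal'' factors from block $n$, $\calC_n(z) = B_{(n+1)(N+2)}(z) B_{n(N+2)+N+1}(z) B_{n(N+2)+N}(z)$ is the bridging triple that spans the $c$-block and carries into the next block, and $E_n(z) = B_{(n+1)(N+2)+i-1}(z) \cdots B_{(n+1)(N+2)+1}(z)$ is the head of normal factors from block $n+1$. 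Uniformly on $K$, $L_n \to \prod_{j=i}^{N-1} \frakB_j$ and $E_n \to \prod_{j=1}^{i-1} \frakB_j$; a direct matrix multiplication using \eqref{eq:64} shows that, although the individual entries of $B_{(n+1)(N+2)}(z)$ diverge like $\tilde{c}_{2n+1}/\tilde{a}_{(n+1)N}$, the triple $\calC_n(z)$ has a bounded limit equal to $\calC(z)$, which identifies the overall limit with the $\calX_i(z)$ from the statement.

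For the discriminant condition in \eqref{eq:59}, every $\calX_i$ has real entries (using reality of $\alpha_n, \beta_n$, and $\delta$), so $\calX_i(x) \in \GL(2, \RR)$ for $x \in \RR$. The factorisation gives $\calX_{i+1} = \frakB_i \calX_i \frakB_i^{-1}$, so all $\calX_i$ are similar to $\calX_1$, hence share trace and determinant; thus $\discr \calX_i(x) = (\tr \calX_1(x))^2 - 4 < 0$ throughout $K$ by hypothesis.

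The main obstacle is verifying \eqref{eq:58}: $(X_{n(N+2)+i} : n \in \NN) \in \twisD(K, \GL(2, \CC))$. I would treat the three factors $L_n$, $\calC_n$, $E_n$ separately and then combine them via Proposition~\ref{prop:2}. For $L_n$ and $E_n$, whose constituent matrices $B_{n(N+2)+j}$ have $j \in \{1, \ldots, N-1\}$ (normal positions, where $|a_{n(N+2)+j}|$ remains bounded above), Proposition~\ref{prop:7} applies: from \eqref{eq:65} we read off $(1/a_{n(N+2)+j}), (b_{n(N+2)+j}) \in \twisD(\CC)$, then \eqref{eq:8} upgrades $1/a_{n(N+2)+j}$ to $a_{n(N+2)+j} \in \twisD(\CC)$ (using boundedness of $a$ at these positions), and \eqref{eq:7} then yields the three ratios required by Proposition~\ref{prop:7}. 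For the bridging triple $\calC_n$ the individual factors are unbounded and Proposition~\ref{prop:7} fails, so I would compute $\calC_n$ entry-by-entry and show that each entry is a bounded rational expression in the quantities $\tilde{c}_{2n}/\tilde{c}_{2n+1}$, $1/\tilde{c}_{2n}$, $1/\tilde{c}_{2n+1}$, $1/\tilde{a}_{(n+1)N}$, and the key cross term $\tilde{c}_{2n+1}\tilde{a}_{nN+N-1}/(\tilde{c}_{2n}\tilde{a}_{(n+1)N})$, each of which lies in $\twisD$ on $K$ thanks to \eqref{eq:64}, \eqref{eq:65}, and repeated use of Proposition~\ref{prop:2}. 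Adding the contributions from each entry and applying \eqref{eq:7} yields $\calC_n \in \twisD$ and hence $X_{n(N+2)+i} \in \twisD(K, \GL(2, \CC))$.

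Theorem~\ref{thm:A} then provides $c^{-1}(|u_0|^2 + |u_1|^2) \leq |a_{n(N+2)+i-1}|(|u_{n(N+2)+i-1}|^2 + |u_{n(N+2)+i}|^2) \leq c(|u_0|^2 + |u_1|^2)$; since $|a_{n(N+2)+i-1}| \to |\alpha_{i-1}| > 0$, the prefactor may be absorbed into $c$, giving \eqref{eq:70}. The spectral conclusion follows from Theorem~\ref{thm:B}: the series $\sum_n 1/|a_{n(N+2)+i-1}|$ diverges, its terms being bounded below by a positive constant, so $\calA$ is proper, $K \cap \sigmaP(A) = \emptyset$, and $K \subset \sigma(A)$.
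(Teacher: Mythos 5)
Your proposal follows essentially the same route as the paper's proof: the identical factorisation of $X_{n(N+2)+i}$ into a head and tail of ``normal'' transfer matrices and the bridging triple $C_n = B_{(n+1)(N+2)}B_{n(N+2)+N+1}B_{n(N+2)+N}$ (the paper's \eqref{eq:148} and \eqref{eq:67}), the same similarity argument giving $\discr \calX_i = (\tr\calX_1)^2-4$, the same use of Propositions~\ref{prop:2} and~\ref{prop:7} together with an entrywise analysis of $C_n$ to verify \eqref{eq:58}, and an equivalent endgame (your appeal to Theorem~\ref{thm:B} is the paper's appeal to Proposition~\ref{prop:4}, since the relevant $|a_{n(N+2)+i-1}|$ converge to $\alpha_{i-1}>0$). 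The one step you assert without justification is $\delta\in\RR$, which is genuinely needed for $\calX_i(x)\in\GL(2,\RR)$ and which the paper obtains from Proposition~\ref{prop:1}(b) applied to the convergent sequence $\big(b_{n(N+2)+N} : n\in\NN\big)=\big(\tilde{d}_{2n} : n\in\NN\big)\in\twisD(\CC)$.
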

\begin{proof}
We are going to show that the hypotheses of Theorem~\ref{thm:A} are satisfied for any fixed $i \in \{1, 2, \ldots, N \}$.

By \eqref{eq:62} and \eqref{eq:63}
\begin{equation} \label{eq:69}
	\lim_{n \to \infty} \frac{a_{n(N+2)+i-1}}{|a_{n(N+2)+i-1}|} = 
	\frac{\alpha_{i-1}}{|\alpha_{i-1}|} = 1.
\end{equation}
Thus $\gamma = 1$. Define
\[
	\tilde{B}_n(z) =
	\begin{pmatrix}
		0 & 1 \\
		-\frac{\tilde{a}_{n-1}}{\tilde{a}_{n}} & \frac{z - \tilde{b}_n}{\tilde{a}_n}
	\end{pmatrix}.
\]
For any $j \in \{ 1, 2, \ldots, N-1 \}$ one has $B_{n(N+2)+j} = \tilde{B}_{nN+j}$. Thus,
\begin{equation} \label{eq:148}
	X_{n(N+2)+i} = 
	\prod_{j=i}^{N+2+i-1} B_{n(N+2)+j} = 
	\Bigg\{
	\prod_{j=1}^{i-1} \tilde{B}_{(n+1)N+j}
	\Bigg\}
	C_n
	\Bigg\{
	\prod_{j=i}^{N-1} \tilde{B}_{nN+j}
	\Bigg\},
\end{equation}
where
\[
	C_n = B_{(n+1)(N+2)} B_{n(N+2)+N+1} B_{n(N+2)+N}.
\]
A direct computation shows that
\begin{align} \label{eq:67}
	C_n(z) &= 
	\begin{pmatrix}
		0 & -\frac{a_{n(N+2)+N}}{a_{n(N+2)+N+1}} \\
		\frac{a_{n(N+2)+N-1}}{a_{(n+1)(N+2)}} \frac{a_{n(N+2)+N+1}}{a_{n(N+2)+N}} & 
		-\frac{z - b_{(n+1)(N+2)}}{a_{(n+1)(N+2)}} \frac{a_{n(N+2)+N}}{a_{n(N+2)+N+1}}
		-\frac{z - b_{n(N+2)+N}}{a_{(n+1)(N+2)}} \frac{a_{n(N+2)+N+1}}{a_{n(N+2)+N}}
	\end{pmatrix} \\ \nonumber
	&+ \frac{z - b_{n(N+2)+N+1}}{a_{n(N+2)+N+1}}
	\begin{pmatrix}
		-\frac{a_{n(N+2)+N-1}}{a_{n(N+2)+N}} & \frac{z - b_{n(N+2)+N}}{a_{n(N+2)+N}} \\
		-\frac{a_{n(N+2)+N-1}}{a_{n(N+2)+N}} \frac{z - b_{(n+1)(N+2)}}{a_{(n+1)(N+2)}} &
		\frac{z - b_{n(N+2)+N}}{a_{n(N+2)+N}} \frac{z - b_{(n+1)(N+2)}}{a_{(n+1)(N+2)}}	
	\end{pmatrix}.
\end{align}
In particular,
\[
	\lim_{n \to \infty} C_n(z) = \calC(z),
\]
and since for any $j \in \{1, 2, \ldots, N-1 \}$
\[
	\lim_{n \to \infty} \tilde{B}_{nN+j}(z) = \frakB_j(z),
\]
one obtains
\begin{equation} \label{eq:68}
	\lim_{n \to \infty} X_{n(N+2)+i}(z) = \calX_i(z).
\end{equation}
By \eqref{eq:62} and \eqref{eq:63} one has
\[
	\lim_{n \to \infty} \frac{a_{n(N+2)+i-1}}{a_{(n+1)(N+2)+i-1}} = \frac{\alpha_{i-1}}{\alpha_{i-1}} = 1.
\]
Hence, by \eqref{eq:69} and \eqref{eq:68} the condition \eqref{eq:57} is satisfied.

Observe that by Proposition~\ref{prop:1} combined with \eqref{eq:63}, \eqref{eq:64} and \eqref{eq:65} one has $\delta \in \RR$. Thus, for any $x \in K$
\[
	\calX_i(x) \in \GL(2, \RR).
\]
Since
\[
	\calX_i (x)= 
	\bigg\{ \prod_{j=1}^{i-1} \frakB_j(x) \bigg\} 
	\calX_1(x)
	\bigg\{ \prod_{j=1}^{i-1} \frakB_j(x) \bigg\}^{-1}
\]
one obtains
\[
	\discr \calX_i(x) = 
	\discr \calX_1(x) = 
	\big( \tr \calX_1(x) \big)^2 - 4 < 0,
\]
and consequently, $K \subset \Lambda$.

Proposition~\ref{prop:2} with \eqref{eq:65} implies
\begin{equation} \label{eq:147a}
	\bigg( \frac{b_n}{a_n} : n \in \NN \bigg) \in \twisD^{N+2}(\CC)
\end{equation}
and for every $j \in \{0, 1, \ldots, N-1 \}$
\begin{equation} \label{eq:147c}
	\big( a_{n(N+2)+j} : n \in \NN \big) \in \twisD(\CC).
\end{equation}
Thus, by Proposition~\ref{prop:2} with \eqref{eq:65} and \eqref{eq:147c},
one obtains that for each $j' \in \{1, \ldots, N+1 \}$
\begin{equation} \label{eq:147b}
	\bigg( \frac{a_{n(N+2)+j'-1}}{a_{n(N+2)+j'}} : n \in \NN \bigg) \in \twisD(\CC).
\end{equation}
By combining \eqref{eq:65}, \eqref{eq:147a} and \eqref{eq:147b} with Proposition~\ref{prop:7} one gets for any $j \in \{0, 1, \ldots, N-1 \}$
\begin{equation} \label{eq:147}
	(\tilde{B}_{nN+j} : n \in \NN) \in \twisD \big( K, \GL(2, \CC) \big).
\end{equation}
Finally, 
and by repeated application of Propositions \ref{prop:1} and \ref{prop:2} to \eqref{eq:67} we can verify that
\[
	\big( C_{n} : n \in \NN \big) \in \twisD \big(K, \GL(2,\CC) \big),
\]
which together with \eqref{eq:148} and \eqref{eq:147} implies
\[
	\big( X_{n(N+2)+i} : n \in \NN \big) \in \twisD \big(K, \GL(2,\CC) \big).
\]
Consequently, the condition \eqref{eq:58} is satisfied. By \eqref{eq:147c} the sequence $\big( a_{n(N+2)+i-1} : n \in \NN \big)$ is bounded. 
Therefore, \eqref{eq:70} follows from Theorem~\ref{thm:A}. Finally, by \eqref{eq:70} we obtain that $u$ is not square summable
and by Proposition~\ref{prop:4} the result follows.
\end{proof}
\begin{remark}
For $i \in \{0, N+1 \}$ the hypotheses of Theorem~\ref{thm:A} are \emph{not} satisfied so we cannot claim that \eqref{eq:70} holds also
in this case. In fact, for $i=0$ the bound \eqref{eq:70} holds but it is not the case for $i=N+1$ (see the proof of 
\cite[Claim 4.12]{ChristoffelI}).
\end{remark}

\begin{bibliography}{jacobi}
\bibliographystyle{amsplain}
\end{bibliography}

\end{document}